\newif\ifsiamlegacy
\siamlegacytrue
\documentclass[final]{siamltex}

\usepackage{amsmath,amssymb,amsfonts,mathtools,bm}
\usepackage{graphicx}
\usepackage{booktabs}
\usepackage{array}
\usepackage{enumitem}
\usepackage{multirow}
\usepackage{threeparttable}
\usepackage{float}
\usepackage{algorithm}
\usepackage[noend]{algpseudocode}
\graphicspath{{figs/}}
\ifsiamlegacy\else
  \setcitestyle{numbers}
\fi

\providecommand{\sgn}{\operatorname{sign}}

\providecommand{\diag}{\operatorname{diag}}
\providecommand{\R}{\mathbb{R}}
\providecommand{\norm}[1]{\left\lVert #1 \right\rVert}

\providecommand{\eps}{\varepsilon}

\providecommand{\texorpdfstring}[2]{#1}

\title{Shifted Matrix-Sign Reflectors for Prescribed-Index Eigenspace Reflection}
\author{Jin Zhao\thanks{Academy of Multidisciplinary Sciences (AMS), Capital Normal University, Beijing 100048, China (\texttt{zjin@cnu.edu.cn}). J.Z. was supported in part by the Beijing Natural Science Foundation (Grant No. JR25003) and the National Key R\&D Program of China (Grant No. 2025YFA1018700).}}
\providecommand{\headers}[2]{\markboth{#1}{#2}}
\headers{Shifted matrix-sign reflectors}{J. Zhao}

\makeatletter
\@ifundefined{c@theorem}{}{%
  \let\c@algorithm\c@theorem
}
\@ifundefined{newsiamthm}{%
  \@ifundefined{discussion}{\newtheorem{discussion}[theorem]{Discussion}}{}%
}{%
  \@ifundefined{proposition}{\newsiamthm{proposition}{Proposition}}{}%
  \@ifundefined{lemma}{\newsiamthm{lemma}{Lemma}}{}%
  \@ifundefined{corollary}{\newsiamthm{corollary}{Corollary}}{}%
  \@ifundefined{discussion}{\newsiamremark{discussion}{Discussion}}{}%
}%
\@ifundefined{MSCcodes}{\newenvironment{MSCcodes}{\begin{AMS}}{\end{AMS}}}{}
\ifsiamlegacy
  \newcommand{\siambibliographystyle}{siam}
\else
  \newcommand{\siambibliographystyle}{siamplain}
\fi
\makeatother

\begin{document}
\maketitle

\begin{abstract}
Spectral projectors and the reflectors derived from them are basic objects in
numerical linear algebra.  This paper studies the prescribed-index reflector
\(I-2P_k\), where \(P_k\) is the spectral projector associated with the first
\(k\) eigenvectors of a symmetric matrix.  If a shift \(s\) lies in the target
spectral gap, then this reflector is exactly the shifted matrix sign
\(\sgn(H-sI)\).  The exact identity is elementary, but its algorithmic
consequences are not: all admissible shifts give the same exact reflector, while
finite-step sign filters can have very different errors.  We analyze odd
sign-preserving spectral filters, prove local inheritance and discrete stability
for the induced reflector iterations, derive a gap-dependent Newton--Schulz
operator bound, and give deterministic admissibility certificates for inexact and
reused shifts.  The analysis identifies the shifted spectral margin as the
quantity controlling finite-step reflector accuracy and explains why the midpoint
shift is the natural default.  Numerical experiments separate the matrix-function
issues from the outer saddle-search dynamics: controlled spectra verify the
margin predictions, low-dimensional tests distinguish shifted signs from raw
signs, target-index scans probe non-small \(k\), and Allen--Cahn and dense timing
tests identify the regimes in which full-matrix sign filters are useful and the
stiff regimes in which stronger sign engines are needed.
\end{abstract}

\begin{keywords}
matrix sign function, matrix-function approximation, Newton--Schulz iteration,
sign-preserving filters, prescribed-index reflectors, saddle dynamics
\end{keywords}

\begin{MSCcodes}
65F60, 65F15, 65F30, 65K05
\end{MSCcodes}

\section{Introduction}

Let \(H\in\R^{d\times d}\) be symmetric with ordered eigenvalues
\(\lambda_1\le\cdots\le\lambda_d\), and let \(P_k\) be the spectral projector
onto the invariant subspace associated with the first \(k\) eigenvectors.  The
operator
\begin{equation}\label{eq:Rk_intro}
R_k=I-2P_k
\end{equation}
flips this prescribed \(k\)-dimensional subspace and leaves its orthogonal
complement unchanged.  Such reflectors are a natural numerical-linear-algebra
object: they are spectral functions of \(H\), they encode a selected invariant
subspace without explicitly storing a basis for its complement, and their action
is needed whenever one wants to reverse a prescribed part of the spectrum.

A motivating example comes from prescribed-index saddle search.  For an energy
\(E\) with explicit symmetric Hessian \(H(x)=\nabla^2E(x)\), high-index saddle
dynamics and related methods use \(R_k(x)=I-2P_k(x)\) to reverse the unstable
eigenspace associated with a target Morse index \(k\), while keeping descent on
the complementary subspace \cite{pHiSD,AHiSD,iHiSD,GHiSD,HiSD,HiSDEuler}.  This
view complements dimer-type, gentlest-ascent, and variational or minimax
approaches to saddle computation \cite{ZhangDu2012ShrinkDimer,OSD,GGAD2025,GAD,
GaoLengZhou2015IMF,LevittOrtner2017Walker,LiZhou2001Minimax}.  In matrix-free
settings, evolving a \(k\)-frame or a projector is often the natural route.  This
paper instead studies the complementary explicit-Hessian regime, where \(H\) is
available as a dense or block-dense symmetric matrix and the central task is the
construction or approximation of the reflector itself.

The starting point is the shifted sign identity
\begin{equation}\label{eq:introexact}
I-2P_k=\sgn(H-sI),
\qquad \lambda_k<s<\lambda_{k+1}.
\end{equation}
At the exact spectral level, \eqref{eq:introexact} is immediate.  Its value is
algorithmic: it turns prescribed-index reflection into a shifted matrix-function
problem.  Matrix sign functions are classical tools for invariant-subspace
computation and related divide-and-conquer procedures
\cite{BaiDemmel1998SignInvariant,ByersHeMehrmann1997SignInvariant,HighamBook,
KenneyLaubSign}.  Here the objective is not to compute a general invariant
subspace, but to build or approximate the reflector \(I-2P_k\) used by an outer
index-selective iteration.  This distinction makes the shift important.  Every
shift inside the target gap gives the same exact reflector, but a finite-step
matrix-function approximation sees a shifted spectrum whose distance from the
singularity at zero can vary substantially.

The paper addresses three questions that arise from this reformulation.  First,
which scalar filters preserve the local index-selective structure of the exact
reflector?  Second, how does the shift affect finite-step sign approximation and
reflector-direction error?  Third, in which computational regimes is a
full-matrix sign realization a reasonable alternative to subspace tracking?  The
answers are intentionally local and spectral.  We do not claim that full-matrix
sign filters replace low-rank eigenspace tracking when \(k\) is small or only
Hessian-vector products are available.  Rather, the goal is to characterize a
complementary dense-kernel regime and to provide the margin and stability
analysis needed to use shifted sign filters reliably in that regime.

The exact identity \(I-2P_k=\operatorname{sign}(H-sI)\), valid for
\(s\in(\lambda_k,\lambda_{k+1})\), is the starting point of the paper.
Its algorithmic significance is that the exact reflector is independent of
the admissible shift, whereas finite-step matrix-function realizations are
strongly affected by the shifted spectral margin.  The main analytical
contributions are as follows.
\begin{enumerate}[leftmargin=1.8em]
\item We recast prescribed-index reflector construction as a shifted
      matrix-sign approximation problem and prove that odd sign-preserving
      filters preserve the local index-selective reflector geometry.  We also
      quantify reflector-direction errors through spectral sign-approximation
      errors and establish the corresponding local discrete stability result.

\item For finite-step Newton--Schulz reflectors, we derive a gap-dependent
      operator error bound on spectrally separated intervals and explain how
      conservative scaling compresses the effective shifted spectral margin,
      especially for stiff spectra arising from semidiscrete problems.

\item We develop shift-placement and admissibility theory for the shifted-sign
      realization: midpoint shifting is shown to maximize the worst-case
      shifted spectral margin, and deterministic certificates are given for
      inexact and reused shifts.
\end{enumerate}
These results are complemented by numerical experiments that separate the
matrix-function effects from the outer saddle-search dynamics.  The experiments
compare exact spectral reflectors, tracked-subspace baselines, raw signs, and
shifted finite-step sign filters in controlled spectral tests, target-index
scans, stiff semidiscrete examples, and dense timing tests.

The rest of the paper is organized as follows.  Section~2 establishes the exact
identity and its local index-selective consequences.  Section~3 studies
sign-preserving filters and specializes the quantitative analysis to
Newton--Schulz.  Section~4 gives the shift, admissibility, algorithmic, and
local discrete stability analysis.  Section~5 presents the numerical evidence,
and Section~6 concludes.

\section{Exact reflector identity and local dynamics}

\subsection{The ideal reflector as a shifted matrix sign}

Let $H\in\R^{d\times d}$ be symmetric, with eigen-decomposition
\[
H=Q\Lambda Q^T,
\qquad
\Lambda=\diag(\lambda_1,\ldots,\lambda_d),
\qquad
\lambda_1\le\cdots\le\lambda_d.
\]
Fix a target index $k\in\{1,\ldots,d-1\}$, write
$Q_k=[q_1,\ldots,q_k]\in\R^{d\times k}$, and define the orthogonal projector
\begin{equation}\label{eq:Pk}
P_k = Q_kQ_k^T = \sum_{i=1}^k q_iq_i^T.
\end{equation}

\begin{proposition}[exact reflector identity]\label{prop:exact}
If $s\in(\lambda_k,\lambda_{k+1})$, then
\begin{equation}\label{eq:exact}
\sgn(H-sI)=I-2P_k.
\end{equation}
\end{proposition}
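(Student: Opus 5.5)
The plan is to work directly from the spectral definition of the matrix sign function for symmetric matrices. Since $H=Q\Lambda Q^T$ with $Q$ orthogonal, the shifted matrix is $H-sI=Q(\Lambda-sI)Q^T$. The matrix sign function of a symmetric matrix with no zero eigenvalue is defined by applying the scalar sign to the eigenvalues. Hence
\[
\sgn(H-sI)=Q\,\diag\bigl(\sgn(\lambda_1-s),\ldots,\sgn(\lambda_d-s)\bigr)\,Q^T .
\]
The first step is to check that this is well defined. The hypothesis $s\in(\lambda_k,\lambda_{k+1})$ together with the ordering $\lambda_1\le\cdots\le\lambda_d$ gives $\lambda_i-s\le\lambda_k-s<0$ for $i\le k$, and $\lambda_i-s\ge\lambda_{k+1}-s>0$ for $i\ge k+1$. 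In particular $H-sI$ is nonsingular, so the sign is defined.

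The second step evaluates the diagonal. By the sign pattern just established,
\[
\sgn(\lambda_i-s)=-1 \quad\text{for } i\le k,
\qquad
\sgn(\lambda_i-s)=+1 \quad\text{for } i>k .
\]
The diagonal matrix is therefore $I-2E_k$, where $E_k=\diag(1,\ldots,1,0,\ldots,0)$ has $k$ ones. Conjugating by $Q$ gives
\[
\sgn(H-sI)=I-2QE_kQ^T=I-2\sum_{i=1}^k q_iq_i^T=I-2P_k,
\]
using \eqref{eq:Pk}.

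The only point needing care is independence of the choice of eigenbasis when eigenvalues repeat. The matrix function is basis independent because it can equivalently be written as $\sum_\mu \sgn(\mu-s)\,\Pi_\mu$ over the distinct eigenvalues $\mu$ with spectral projectors $\Pi_\mu$. Likewise, $P_k$ is basis independent precisely because the strict gap $\lambda_k<\lambda_{k+1}$, which the hypothesis implies, prevents any eigenspace from straddling index $k$. So $P_k$ is the sum of the $\Pi_\mu$ with $\mu<s$. Rewriting both sides in terms of the $\Pi_\mu$ therefore settles this point, and there is no genuine obstacle.
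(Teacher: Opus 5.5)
Your proof is correct and follows essentially the same route as the paper: diagonalize $H-sI=Q(\Lambda-sI)Q^T$, read off the sign pattern ($-1$ on the first $k$ shifted eigenvalues, $+1$ on the rest), and reassemble $I-2\sum_{i\le k}q_iq_i^T=I-2P_k$. Your extra remark on basis independence under repeated eigenvalues, via the spectral projectors $\Pi_\mu$ and the strict gap $\lambda_k<\lambda_{k+1}$, is a correct refinement that the paper leaves implicit.
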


\begin{proof}
Because $H-sI=Q\diag(\lambda_1-s,\ldots,\lambda_d-s)Q^T$ and
$s\in(\lambda_k,\lambda_{k+1})$, the first $k$ shifted eigenvalues are negative
and the remaining $d-k$ are positive.  Hence
\[
\sgn(H-sI)
=Q\diag(\underbrace{-1,\ldots,-1}_{k},
        \underbrace{1,\ldots,1}_{d-k})Q^T
 =\sum_{i=k+1}^d q_iq_i^T-\sum_{i=1}^k q_iq_i^T
 =I-2P_k.
\]
\end{proof}

Proposition~\ref{prop:exact} is the algebraic entry point.  Once the shift is
placed in the target spectral gap, the prescribed-index reflector is a matrix
sign.  All approximation issues are therefore shifted to the choice of sign
engine, scaling, and gap placement.

\subsection{Relation to subspace-tracking realizations}

Identity \eqref{eq:exact} reorganizes the reflector evaluation step.  A full
HiSD method evolves both the position variable and a frame or projector that
tracks the target unstable eigenspace \cite{pHiSD,GHiSD,HiSDEuler}.  In that
language,
\begin{itemize}[leftmargin=1.6em]
\item in standard HiSD, $R_k(x)$ is built from an explicitly evolved frame or
      projector;
\item in a sign realization, $R_k(x)$ is built from
      $\sgn(H(x)-s(x)I)$ or from an approximation thereof.
\end{itemize}
For explicit Hessians, the second route replaces repeated orthonormalization in
the $x$-equation by matrix-function evaluation together with gap estimation.
The resulting numerical kernels are therefore different, even though the target
reflector geometry is the same.

\subsection{Local prescribed-index selectivity}

We now record the local dynamical meaning of
Proposition~\ref{prop:exact}.  Consider the shifted-sign flow
\begin{equation}\label{eq:shiftedflow}
\dot x = -\sgn\!\bigl(H(x)-s_\star I\bigr)\nabla E(x),
\end{equation}
where $s_\star$ is a fixed shift placed inside the target gap of the critical
point under study.

\begin{proposition}[local selectivity]\label{prop:local}
Let $x_\star$ be a nondegenerate critical point of $E$ with Hessian
$H_\star=\nabla^2E(x_\star)$ and Morse index $j$.
Fix a target index $k\in\{1,\ldots,d-1\}$ and choose
$s_\star\in(\lambda_k(H_\star),\lambda_{k+1}(H_\star))$.
Then the linearization of \eqref{eq:shiftedflow} at $x_\star$ is
\[
\dot \xi = -\sgn(H_\star-s_\star I)H_\star \,\xi,
\]
and has negative spectrum if and only if $j=k$.
More precisely, the number of positive eigenvalues of the linearization equals
$|j-k|$.
\end{proposition}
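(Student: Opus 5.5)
The plan is first to verify the linearization and then to read off its spectrum in the eigenbasis of \(H_\star\).

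\textbf{Linearization.} Write \(F(x)=-\sgn(H(x)-s_\star I)\nabla E(x)\). Since \(s_\star\) lies strictly inside the gap \((\lambda_k(H_\star),\lambda_{k+1}(H_\star))\), the matrix \(H_\star-s_\star I\) is nonsingular. By continuity of the eigenvalues, \(H(x)-s_\star I\) remains nonsingular with the same inertia for \(x\) near \(x_\star\). Hence \(x\mapsto \sgn(H(x)-s_\star I)\) is \(C^1\) in a neighborhood of \(x_\star\); it equals \(I-2P(x)\), where \(P(x)\) is a Riesz projector given by a fixed contour integral.

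Differentiating at \(x_\star\) gives two terms. The term \(D[\sgn(H(x)-s_\star I)]\,\nabla E(x_\star)\) vanishes because \(\nabla E(x_\star)=0\). The remaining term is \(-\sgn(H_\star-s_\star I)H_\star\), which is the claimed Jacobian.

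\textbf{Spectrum.} By Proposition~\ref{prop:exact}, \(\sgn(H_\star-s_\star I)=Q\,\diag(-1,\dots,-1,1,\dots,1)\,Q^T\), with \(k\) entries equal to \(-1\). Since \(H_\star=Q\Lambda Q^T\) with the same \(Q\), the Jacobian
\[
J=-\sgn(H_\star-s_\star I)H_\star=Q\,\diag(\lambda_1,\dots,\lambda_k,-\lambda_{k+1},\dots,-\lambda_d)\,Q^T
\]
is symmetric and diagonalized by \(Q\).

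Nondegeneracy gives \(\lambda_i\neq0\) for all \(i\), and Morse index \(j\) means \(\lambda_1,\dots,\lambda_j<0<\lambda_{j+1},\dots,\lambda_d\).

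\textbf{Counting positive eigenvalues.} The positive eigenvalues of \(J\) are of two kinds.
\begin{itemize}
\item Indices \(i\le k\) with \(\lambda_i>0\), i.e.\ \(j<i\le k\). There are \(\max(k-j,0)\) of these.
\item Indices \(i>k\) with \(\lambda_i<0\), i.e.\ \(k<i\le j\). There are \(\max(j-k,0)\) of these.
\end{itemize}
At most one of the two counts is nonzero, so the total is \(|j-k|\).

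All other eigenvalues of \(J\) are strictly negative, and no eigenvalue is zero by nondegeneracy. Therefore the spectrum of \(J\) is negative if and only if \(j=k\).

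\textbf{Main obstacle.} The only delicate point is the differentiability of the sign factor at \(x_\star\). This requires the gap condition on \(s_\star\) together with a \(C^2\) assumption on \(H\), i.e.\ \(E\in C^3\), which I will state explicitly. Even with weaker regularity, the derivative term is multiplied by \(\nabla E(x_\star)=0\), so a Lipschitz bound on the sign factor near \(x_\star\) suffices to make that term \(o(\|x-x_\star\|)\).
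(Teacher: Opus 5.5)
Your proposal is correct and follows the paper's proof: you diagonalize in the eigenbasis of $H_\star$ to get $-Q\,\diag\bigl(\sgn(\lambda_i-s_\star)\lambda_i\bigr)Q^T$, then count the indices where $\lambda_i$ and $\lambda_i-s_\star$ have opposite signs, which gives $|j-k|$ positive eigenvalues. Your explicit justification of the linearization is a point the paper glosses over, but you do not need $E\in C^3$ or a Lipschitz bound: continuity of the sign factor near $x_\star$ (so $E\in C^2$) already suffices, since $\bigl(\sgn(H(x)-s_\star I)-\sgn(H_\star-s_\star I)\bigr)\nabla E(x)=o(1)\cdot O(\norm{x-x_\star})$.
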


\begin{proof}
Write $H_\star=Q\diag(\lambda_1,\ldots,\lambda_d)Q^T$.
Since $x_\star$ is nondegenerate,
$\nabla E(x_\star+\xi)=H_\star\xi+O(\norm{\xi}^2)$.
Therefore the linearization is
\[
-Q\diag\!\bigl(\sgn(\lambda_i-s_\star)\lambda_i\bigr)_{i=1}^dQ^T.
\]
If $j=k$, then $\lambda_i<0$ for $i\le k$ and $\lambda_i>0$ for $i>k$, while
$\lambda_i-s_\star<0$ for $i\le k$ and $\lambda_i-s_\star>0$ for $i>k$.
Hence every diagonal entry equals either $\lambda_i<0$ or $-\lambda_i<0$.
If $j<k$, then for $i=j+1,\ldots,k$ one has
$\lambda_i>0$ and $\lambda_i-s_\star<0$, which produces $k-j$ positive
eigenvalues.
If $j>k$, then for $i=k+1,\ldots,j$ one has
$\lambda_i<0$ and $\lambda_i-s_\star>0$, which produces $j-k$ positive
eigenvalues.
\end{proof}

Proposition~\ref{prop:local} gives the precise local fixed-index behavior one
expects from an ideal prescribed-index reflector.  This is the local statement
that the rest of the paper aims to preserve under approximate sign evaluation.

\subsection{Why the raw \texorpdfstring{$\sgn(H)$}{sign(H)} flow is not a prescribed-index method}

The unshifted flow
\begin{equation}\label{eq:rawflow}
\dot x = -\sgn(H(x))\nabla E(x)
\end{equation}
is superficially attractive because it uses no target shift.  Its local dynamics
is different in a decisive way.

\begin{proposition}\label{prop:raw}
Let $x_\star$ be a nondegenerate critical point with Hessian $H_\star$.
The linearization of \eqref{eq:rawflow} at $x_\star$ is
\[
\dot \xi = -\sgn(H_\star)H_\star\,\xi = -|H_\star|\,\xi.
\]
Consequently every nondegenerate critical point of $E$ is locally exponentially
stable for the raw sign flow.
\end{proposition}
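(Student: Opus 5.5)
The plan is to mirror the proof of Proposition~\ref{prop:local}, with one extra point: the map \(x\mapsto\sgn(H(x))\) must be smooth near \(x_\star\). Write \(H_\star=Q\diag(\lambda_1,\ldots,\lambda_d)Q^T\). Nondegeneracy means every \(\lambda_i\neq0\), so the spectrum of \(H_\star\) is bounded away from zero by \(\delta=\min_i|\lambda_i|>0\).

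\textbf{Step 1 (smoothness of the vector field).} Assume \(E\in C^2\), so \(H\) is continuous. By Weyl's inequality, \(\norm{H(x)-H_\star}<\delta/2\) in a neighbourhood of \(x_\star\), and there no eigenvalue of \(H(x)\) lies in \((-\delta/2,\delta/2)\). On this neighbourhood \(\sgn(H(x))\) equals the Riesz-projector expression \(I-2P_-(x)\), where \(P_-(x)\) is obtained from a fixed contour enclosing the negative spectrum. Hence \(\sgn(H(x))\) depends as smoothly on \(x\) as \(H\) does; in particular it is continuous, and Lipschitz if \(H\) is.

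\textbf{Step 2 (linearization).} Write \(F(x)=-\sgn(H(x))\nabla E(x)\). Since \(\nabla E(x_\star)=0\), we have \(\nabla E(x_\star+\xi)=H_\star\xi+o(\norm{\xi})\). Because \(\sgn(H(x_\star+\xi))=\sgn(H_\star)+o(1)\) by Step 1,
\[
F(x_\star+\xi)=-\sgn(H_\star)H_\star\xi+o(\norm{\xi}).
\]
The term in which the derivative of the sign multiplies \(\nabla E(x_\star)=0\) drops out. So the Jacobian at \(x_\star\) is \(-\sgn(H_\star)H_\star\), and this step does not need a derivative of \(\sgn\).

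\textbf{Step 3 (identification with \(-|H_\star|\)).} From the eigendecomposition,
\[
\sgn(H_\star)H_\star=Q\diag(\sgn(\lambda_i)\lambda_i)Q^T=Q\diag(|\lambda_i|)Q^T=|H_\star|.
\]
This is symmetric positive definite with smallest eigenvalue \(\delta>0\). The linearization therefore has spectrum \(\{-|\lambda_i|\}\subset(-\infty,-\delta]\), whatever the Morse index \(j\) is. Equivalently, it is the case \(s_\star=0\) of Proposition~\ref{prop:local}: every factor \(\sgn(\lambda_i)\lambda_i\) is positive, so there are no sign mismatches.

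\textbf{Step 4 (stability).} By the linearized stability (Lyapunov indirect) theorem, a \(C^1\) vector field whose Jacobian at an equilibrium has spectrum in the open left half-plane gives local exponential stability. When \(F\) is only known to be differentiable at \(x_\star\) with an \(o(\norm{\xi})\) remainder, I would argue directly with \(V(\xi)=\norm{\xi}^2\):
\[
\dot V=-2\xi^T|H_\star|\xi+o(\norm{\xi}^2)\le-\delta\norm{\xi}^2
\]
near \(0\), which gives \(\norm{\xi(t)}\le e^{-\delta t/2}\norm{\xi(0)}\).

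The main obstacle is Step 1: making sure the sign function of the varying Hessian is well defined and continuous near \(x_\star\). This is exactly where nondegeneracy is used, since it places zero in a gap of \(H_\star\). The rest is the diagonal computation already used for Proposition~\ref{prop:local}.
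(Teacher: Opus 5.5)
Your proposal is correct, and its core is exactly the paper's proof: diagonalize $H_\star$, note that $\sgn(H_\star)H_\star=Q\diag(|\lambda_i|)Q^T=|H_\star|$, and conclude that the linearization $-|H_\star|$ is negative definite. Your Steps 1, 2 and 4 fill in details the paper leaves implicit: continuity of $\sgn(H(x))$ near $x_\star$ from the spectral gap at zero, the fact that the derivative of the sign multiplies $\nabla E(x_\star)=0$, and the Lyapunov estimate with $V=\norm{\xi}^2$ that passes from linear to nonlinear stability.
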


\begin{proof}
If $H_\star=Q\diag(\lambda_1,\ldots,\lambda_d)Q^T$, then
\[
\sgn(H_\star)H_\star
=Q\diag(\sgn(\lambda_i)\lambda_i)Q^T
=Q\diag(|\lambda_i|)Q^T.
\]
Hence the linearization equals $-Q\diag(|\lambda_i|)Q^T$, which is negative
definite.
\end{proof}

Proposition~\ref{prop:raw} explains the role of raw $\sgn(H)$ in this paper.
It is a useful \emph{adaptive-index stationary-point search baseline}; it is not
a prescribed-index realization of the HiSD reflector.

\section{Approximate reflectors via sign-preserving filters}

The exact identity \eqref{eq:exact} suggests a broader class of reflector
surrogates.  Rather than computing the exact matrix sign, we apply a scalar
filter to the shifted Hessian spectrum.

\subsection{A general sign-preserving theorem}

Let $I\subset\R$ be an interval and let $\phi:I\to\R$ be a continuous odd
function such that
\begin{equation}\label{eq:signpreserving}
\phi(t)\,t>0\qquad\text{for every }t\in I\setminus\{0\}.
\end{equation}
We call such a $\phi$ a \emph{sign-preserving filter} on $I$.
For a scaling parameter $\alpha>0$ and a shift $s$, define the filtered
reflector
\begin{equation}\label{eq:phi-reflector}
\mathcal R_{\phi,\alpha}(H;s)=\phi\!\bigl(\alpha(H-sI)\bigr)
\end{equation}
via the standard spectral functional calculus.

\begin{theorem}[local selectivity for filtered reflectors]\label{thm:generic}
Let $x_\star$ be a nondegenerate critical point of $E$ with Hessian $H_\star$
and Morse index $j$.  Fix a target index $k$, choose a shift
$s_\star\in(\lambda_k(H_\star),\lambda_{k+1}(H_\star))$, and choose
$\alpha>0$ so that every scaled shifted eigenvalue
$\alpha(\lambda_i(H_\star)-s_\star)$ lies in $I$.
Consider the filtered dynamics
\begin{equation}\label{eq:filteredflow}
\dot x = -\mathcal R_{\phi,\alpha}(H(x);s_\star)\nabla E(x).
\end{equation}
Then the linearization of \eqref{eq:filteredflow} at $x_\star$ has exactly the
same inertia as the exact shifted-sign linearization in
Proposition~\ref{prop:local}.  In particular, $x_\star$ is locally
exponentially stable if and only if $j=k$.
\end{theorem}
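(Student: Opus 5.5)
The plan is to linearize \eqref{eq:filteredflow} at $x_\star$ and diagonalize the result in the eigenbasis of $H_\star$. Since $\nabla E(x_\star)=0$, the product rule gives
\[
D\bigl[\mathcal R_{\phi,\alpha}(H(x);s_\star)\nabla E(x)\bigr]_{x=x_\star}\xi
=\bigl(D\mathcal R\,[\xi]\bigr)\nabla E(x_\star)+\mathcal R_{\phi,\alpha}(H_\star;s_\star)H_\star\xi .
\]
The first term vanishes. This step needs $x\mapsto\phi(\alpha(H(x)-s_\star I))$ to be differentiable, or at least Lipschitz, near $x_\star$. Here is the main technical point I would address first. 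Continuity of $\phi$ alone gives only continuity of the matrix function, which yields $\|\mathcal R(x)-\mathcal R_\star\|\to0$. That is enough, because
\[
\mathcal R(x)\nabla E(x)=\mathcal R_\star H_\star\xi+(\mathcal R(x)-\mathcal R_\star)H_\star\xi+\mathcal R(x)O(\|\xi\|^2),
\]
and the middle term is $o(\|\xi\|)$. So the map is differentiable at $x_\star$ with derivative $\mathcal R_\star H_\star$, exactly as in Proposition~\ref{prop:local}. Some care is needed because $\phi$ is defined only on $I$. The eigenvalues of $\alpha(H(x)-s_\star I)$ stay in a neighborhood of those of the scaled $H_\star$, and I would assume, or remark, that $I$ is open or contains a neighborhood of these points. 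Alternatively I would simply take the linearization as $-\mathcal R_\star H_\star$, which is the paper's convention in Proposition~\ref{prop:local}.

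Next, write $H_\star=Q\Lambda Q^T$. By the functional calculus, $\mathcal R_\star=Q\diag(\phi(\alpha(\lambda_i-s_\star)))Q^T$. Hence the linearization is
\[
-Q\diag\bigl(\phi(\alpha(\lambda_i-s_\star))\lambda_i\bigr)Q^T,
\]
a symmetric matrix with these explicit eigenvalues. Since $s_\star$ lies strictly in the gap, $\lambda_i-s_\star\neq0$ for all $i$. By \eqref{eq:signpreserving} and $\alpha>0$, each $\phi(\alpha(\lambda_i-s_\star))$ is nonzero and has the sign $\sgn(\lambda_i-s_\star)$. Nondegeneracy gives $\lambda_i\ne0$. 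Therefore each diagonal entry $-\phi(\alpha(\lambda_i-s_\star))\lambda_i$ is nonzero and has the same sign as the exact entry $-\sgn(\lambda_i-s_\star)\lambda_i$; only the positive magnitude changes.

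By Sylvester's law, or directly since both matrices are diagonal in the same basis, the two linearizations have identical inertia. The case analysis of Proposition~\ref{prop:local} then applies verbatim: there are exactly $|j-k|$ positive eigenvalues and no zero eigenvalues. Hence the spectrum is negative if and only if $j=k$.

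Local exponential stability follows from the linearization principle, which requires the vector field to be $C^1$ near $x_\star$, or at least differentiable at $x_\star$ with the remainder estimate above. The converse direction, instability when $j\ne k$, follows because a positive eigenvalue gives linear instability and hence instability by the standard Lyapunov/Perron argument. This part needs a $C^1$ vector field, so I would state it under a $C^1$ assumption on $\phi$ and $E\in C^3$. I expect this regularity bookkeeping, not the inertia count, to be the only genuine obstacle. I would handle it by noting that the filters of interest, such as polynomial Newton–Schulz iterates, are smooth.
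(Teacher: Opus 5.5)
Your proposal is correct and follows the paper's proof: you diagonalize $\mathcal R_{\phi,\alpha}(H_\star;s_\star)$ in the eigenbasis of $H_\star$, use \eqref{eq:signpreserving} to see that each entry $\phi(\alpha(\lambda_i-s_\star))\lambda_i$ has the same sign as $\sgn(\lambda_i-s_\star)\lambda_i$, and rerun the case split of Proposition~\ref{prop:local}. Your regularity bookkeeping is a sensible addition (continuity of the matrix function already gives differentiability at $x_\star$, which is what the paper itself uses in the proof of Theorem~\ref{thm:discrete}), but the extra $C^1$ and $E\in C^3$ hypotheses for the instability direction are not needed: the linearization is symmetric, so a quadratic Chetaev function built from its positive and negative eigenspaces works with differentiability at $x_\star$ alone.
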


\begin{proof}
Because $\mathcal R_{\phi,\alpha}(H_\star;s_\star)$ is a spectral function of
$H_\star$, it is diagonalized by the eigenbasis of $H_\star$:
\[
\mathcal R_{\phi,\alpha}(H_\star;s_\star)
=Q\diag\!\bigl(\phi(\alpha(\lambda_i-s_\star))\bigr)Q^T.
\]
As in the proof of Proposition~\ref{prop:local}, the linearization equals
\[
-Q\diag\!\bigl(\phi(\alpha(\lambda_i-s_\star))\lambda_i\bigr)Q^T.
\]
By \eqref{eq:signpreserving},
$\sgn(\phi(\alpha(\lambda_i-s_\star)))=\sgn(\lambda_i-s_\star)$,
so the sign of every diagonal entry is identical to the exact shifted-sign case.
The same case split as in Proposition~\ref{prop:local} therefore yields the
result.
\end{proof}

Theorem~\ref{thm:generic} is the main local approximation theorem of the paper.
Its content is simple but useful: local prescribed-index behavior depends on a
scalar sign-preservation property, not on one specific iteration.

\subsection{Operator and direction error}

Theorem~\ref{thm:generic} is qualitative.  For quantitative control we separate
the exact reflector from the filter approximation.

\begin{proposition}[spectral error formula]\label{prop:spectralerr}
Let $A\in\R^{d\times d}$ be symmetric with eigenvalues $\mu_1,\ldots,\mu_d$,
let $R=\sgn(A)$, and let $\widetilde R=\phi(A)$ for a scalar function
$\phi$ defined on the spectrum of $A$.  Then
\begin{equation}\label{eq:spectralerr}
\norm{\widetilde R-R}_2
=\max_{1\le i\le d}\bigl|\phi(\mu_i)-\sgn(\mu_i)\bigr|.
\end{equation}
\end{proposition}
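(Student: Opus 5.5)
The plan is to diagonalize both operators in a common orthonormal eigenbasis of $A$ and then use the fact that the spectral norm of a symmetric matrix equals its largest eigenvalue in absolute value.

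First, write $A=U\diag(\mu_1,\ldots,\mu_d)U^T$ with $U$ orthogonal. By the definition of the spectral functional calculus used throughout the paper (as in the proofs of Proposition~\ref{prop:exact} and Theorem~\ref{thm:generic}), we have
\[
R=U\diag\bigl(\sgn(\mu_i)\bigr)U^T,\qquad \widetilde R=U\diag\bigl(\phi(\mu_i)\bigr)U^T .
\]
Here we adopt the convention $\sgn(0)=0$ if a zero eigenvalue occurs, and $\phi$ is evaluated at the same points. Subtracting gives
\[
\widetilde R-R=U\diag\bigl(\phi(\mu_i)-\sgn(\mu_i)\bigr)U^T.
\]
This is a real symmetric matrix whose eigenvalues are exactly the numbers $\phi(\mu_i)-\sgn(\mu_i)$.

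Second, invoke orthogonal invariance of the $2$-norm: $\norm{UDU^T}_2=\norm{D}_2$. For a real diagonal $D$, this equals $\max_i|D_{ii}|$: it is attained at the corresponding coordinate vector, and $\norm{Dv}_2\le\max_i|D_{ii}|\,\norm{v}_2$ for every $v$. This yields \eqref{eq:spectralerr}.

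The only point needing care is well-definedness when eigenvalues repeat. The matrix functions $\sgn(A)$ and $\phi(A)$ do not depend on the choice of eigenbasis, since they can be written as $\sum_\mu f(\mu)\Pi_\mu$ over the distinct eigenvalues with their spectral projectors $\Pi_\mu$. So one common $U$ diagonalizes both operators, and no real obstacle remains; the argument is entirely routine.
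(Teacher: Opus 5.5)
Your proposal is correct and follows essentially the same route as the paper: orthogonally diagonalize $A$, write $\widetilde R-R=U\diag\bigl(\phi(\mu_i)-\sgn(\mu_i)\bigr)U^T$, and use that the spectral norm of a symmetric matrix equals its largest eigenvalue in absolute value. Your remarks on basis-independence under repeated eigenvalues and on the convention $\sgn(0)=0$ are harmless refinements that the paper leaves implicit.
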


\begin{proof}
Diagonalize $A=Q\diag(\mu_i)Q^T$.  Then
\[
\widetilde R-R
=Q\diag\!\bigl(\phi(\mu_i)-\sgn(\mu_i)\bigr)Q^T.
\]
For a symmetric matrix, the spectral norm equals the maximum absolute eigenvalue,
which gives \eqref{eq:spectralerr}.
\end{proof}

\begin{proposition}[direction error]\label{prop:direrr}
Let $A=H-sI$ be symmetric, set $R=\sgn(A)$, and let $\widetilde R$ be any
symmetric reflector surrogate.  For the exact and approximate search directions
$d=-Rg$ and $\widetilde d=-\widetilde Rg$,
\begin{equation}\label{eq:direrr}
\norm{\widetilde d-d}_2\le \norm{\widetilde R-R}_2\,\norm{g}_2.
\end{equation}
\end{proposition}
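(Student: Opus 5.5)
The plan is to derive \eqref{eq:direrr} directly from linearity of the direction map together with the definition of the induced spectral norm. We will not use the spectral structure of $A$; Proposition~\ref{prop:spectralerr} only enters afterwards, to evaluate the right-hand side.

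First, we subtract the two directions. This gives
\[
\widetilde d-d=-\widetilde Rg+Rg=-(\widetilde R-R)g.
\]
Taking Euclidean norms removes the sign. The operator-norm inequality $\norm{Mg}_2\le\norm{M}_2\norm{g}_2$, which holds for any $M\in\R^{d\times d}$, then yields the claim with $M=\widetilde R-R$.

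This argument needs neither symmetry of $\widetilde R$ nor the involution property of $R$. Those hypotheses matter only when combining the result with Proposition~\ref{prop:spectralerr}. When $\widetilde R=\phi(A)$, that proposition evaluates $\norm{\widetilde R-R}_2$ as $\max_i|\phi(\mu_i)-\sgn(\mu_i)|$. Here the $\mu_i=\lambda_i-s$ are the shifted eigenvalues, which are nonzero because $s$ lies in the gap.

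There is no real obstacle here. The only point to watch is that the bound should be stated in the induced $2$-norm, so that the operator-norm inequality applies as used.
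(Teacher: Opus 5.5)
Your proof is correct and matches the paper's, which likewise writes $\widetilde d-d=-(\widetilde R-R)g$ and applies the induced $2$-norm inequality. Your remarks that symmetry of $\widetilde R$ is not needed and that the spectral error formula only enters when evaluating the right-hand side are accurate. The claim that the shifted eigenvalues are nonzero because $s$ lies in the gap is not part of this proposition's hypotheses, but your proof does not use it.
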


\begin{proof}
This is immediate from $\widetilde d-d=-(\widetilde R-R)g$.
\end{proof}

Propositions~\ref{prop:spectralerr}--\ref{prop:direrr} explain the numerical
role of the shift.  The shift does not change the exact reflector as long as it
remains inside the target gap; it changes the \emph{scalar approximation error}
of the finite-step filter, which then propagates directly to the reflector and
hence to the search direction.

\subsection{Newton--Schulz as a principal example}

For a symmetric matrix $A$ with no zero eigenvalues, the scaled Newton--Schulz
iteration reads
\begin{equation}\label{eq:NS}
X_{m+1}=\tfrac12 X_m(3I-X_m^2),\qquad X_0=\alpha A.
\end{equation}
The corresponding scalar polynomials are
\begin{equation}\label{eq:pm}
p_0(t)=t,\qquad
p_{m+1}(t)=\tfrac12 p_m(t)\bigl(3-p_m(t)^2\bigr).
\end{equation}

\begin{lemma}\label{lem:NSsign}
If $t\in[-1,1]\setminus\{0\}$, then
\[
\sgn\bigl(p_m(t)\bigr)=\sgn(t)\qquad\text{for every }m\ge 0.
\]
\end{lemma}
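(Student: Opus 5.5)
The plan is to argue by induction on \(m\), using the scalar map \(f(u)=\tfrac12 u(3-u^2)\), so that \(p_{m+1}=f\circ p_m\). The induction hypothesis will be strengthened to carry an invariant set:
\[
p_m(t)\in[-1,1]\setminus\{0\}
\quad\text{and}\quad
\sgn(p_m(t))=\sgn(t).
\]
The base case \(m=0\) is immediate because \(p_0(t)=t\).

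The inductive step rests on one scalar fact: \(f\) maps \((0,1]\) into \((0,1]\). To see this, take \(u\in(0,1]\). Then \(3-u^2\ge 2>0\), so \(f(u)>0\). For the upper bound, \(f'(u)=\tfrac32(1-u^2)\ge0\) on \([0,1]\), so \(f\) is nondecreasing there, and therefore \(f(u)\le f(1)=1\). Since \(f\) is odd, it also maps \([-1,0)\) into \([-1,0)\).

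Now suppose \(p_m(t)\in[-1,1]\setminus\{0\}\) with the sign of \(t\). By the scalar fact, \(p_{m+1}(t)=f(p_m(t))\) lies in the same half-interval, \((0,1]\) or \([-1,0)\). Hence \(p_{m+1}(t)\) is nonzero, still lies in \([-1,1]\), and has the same sign as \(p_m(t)\), which is \(\sgn(t)\). This closes the induction.

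The only point that needs care is the strengthening of the hypothesis. Sign preservation alone cannot be iterated, because \(f\) changes sign once \(|u|>\sqrt3\). The iterates must therefore be kept inside \([-1,1]\), and the monotonicity of \(f\) on \([0,1]\) is exactly what guarantees this.
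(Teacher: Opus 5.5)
Your proof is correct and follows the paper's argument: induction on $m$ via $p_{m+1}=f\circ p_m$, with $f$ mapping $(0,1]$ into itself and $[-1,0)$ into itself by oddness (an invariant that the paper's ``induction gives the claim'' uses tacitly and you state explicitly). The only cosmetic difference is how you get the upper bound: you obtain $f(u)\le 1$ from monotonicity ($f'\ge 0$ on $[0,1]$), whereas the paper uses the factorization $f(t)-1=-\tfrac12(t-1)^2(t+2)\le 0$.
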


\begin{proof}
Let $f(t)=\frac12 t(3-t^2)$.
If $t\in(0,1]$, then $f(t)>0$ and
\[
f(t)-1=-\tfrac12(t-1)^2(t+2)\le 0,
\]
so $f(t)\in(0,1]$.
By oddness, $f(t)\in[-1,0)$ whenever $t\in[-1,0)$.
Since $p_{m+1}=f\circ p_m$ and $p_0(t)=t$, induction gives the claim.
\end{proof}

\begin{corollary}\label{cor:NS}
Assume the setting of Theorem~\ref{thm:generic} and choose
$\alpha\le \rho(H_\star-s_\star I)^{-1}$.
Then for every $m\ge 0$, the finite-step Newton--Schulz reflector
\[
\mathcal R_m(H;s_\star)=p_m\!\bigl(\alpha(H-s_\star I)\bigr)
\]
defines a flow of the form \eqref{eq:filteredflow} that is locally
exponentially stable at $x_\star$ if and only if $x_\star$ has Morse index $k$.
\end{corollary}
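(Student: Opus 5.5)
The plan is to reduce the corollary directly to Theorem~\ref{thm:generic} by taking \(I=[-1,1]\) and \(\phi=p_m\), and checking its hypotheses.

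The first step is to check that \(p_m\) is a sign-preserving filter on \([-1,1]\). It is a polynomial, hence continuous. It is odd by induction: \(p_0(t)=t\) is odd, and if \(p_m\) is odd then so is \(p_{m+1}=f\circ p_m\), because \(f(t)=\tfrac12 t(3-t^2)\) is odd. Lemma~\ref{lem:NSsign} gives \(\sgn(p_m(t))=\sgn(t)\) for \(t\in[-1,1]\setminus\{0\}\). In particular \(p_m(t)\neq0\) there, so \(p_m(t)\,t>0\), which is exactly \eqref{eq:signpreserving}.

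The second step is to verify the scaling condition of Theorem~\ref{thm:generic}. Since \(H_\star-s_\star I\) is symmetric, its spectral radius satisfies
\[
\rho(H_\star-s_\star I)=\max_i|\lambda_i(H_\star)-s_\star|,
\]
and this is positive because \(s_\star\) is not an eigenvalue. The assumption \(\alpha\le\rho(H_\star-s_\star I)^{-1}\) then gives \(|\alpha(\lambda_i-s_\star)|\le1\) for every \(i\), so all scaled shifted eigenvalues lie in \(I=[-1,1]\). Moreover, \(s_\star\in(\lambda_k,\lambda_{k+1})\) ensures that none of them is zero.

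The third step identifies the flow. By the spectral functional calculus, \(p_m(\alpha(H-s_\star I))\) coincides with the matrix \(X_m\) produced by \eqref{eq:NS}, because polynomial recursions commute with spectral calculus. Hence \(\mathcal R_m(H;s_\star)=\mathcal R_{p_m,\alpha}(H;s_\star)\), and the flow has the form \eqref{eq:filteredflow}. Theorem~\ref{thm:generic} then says the linearization has the inertia of Proposition~\ref{prop:local}, and in particular local exponential stability holds exactly when \(j=k\).

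The only mildly delicate point is that the filter need only be sign-preserving on the spectrum at \(x_\star\). This is because the linearization involves \(H_\star\) alone: the derivative term coming from \(H(x)\) multiplies \(\nabla E(x_\star)=0\). The proof of Theorem~\ref{thm:generic} already uses this implicitly, and \(p_m\) is a smooth polynomial defined on all of \(\R\), so no further obstacle is expected.
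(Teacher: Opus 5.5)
Your proposal is correct and follows the paper's proof. The paper also uses the scaling assumption to place the spectrum of $\alpha(H_\star-s_\star I)$ in $[-1,1]$, uses Lemma~\ref{lem:NSsign} to verify \eqref{eq:signpreserving} for $\phi=p_m$, and then invokes Theorem~\ref{thm:generic}. Your extra checks are details the paper leaves implicit: oddness of $p_m$ by induction, the identification $X_m=p_m(\alpha A)$, and the fact that $p_m$ is defined on all of $\R$, so nearby Hessians whose scaled spectra leave $[-1,1]$ cause no difficulty.
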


\begin{proof}
The scaling assumption places the eigenvalues of
$\alpha(H_\star-s_\star I)$ in $[-1,1]$.
Lemma~\ref{lem:NSsign} shows that $\phi=p_m$ satisfies the sign-preservation
hypothesis \eqref{eq:signpreserving}, so Theorem~\ref{thm:generic} applies.
\end{proof}

Corollary~\ref{cor:NS} isolates the reason Newton--Schulz is viable here:
finite-step Newton--Schulz can already preserve the correct local fixed-index
geometry, even when the exact sign is not yet accurately approximated.  The
latter issue is quantitative and is governed by
Propositions~\ref{prop:spectralerr}--\ref{prop:direrr}.  We emphasize again that
Newton--Schulz is only a principal example; the same framework applies to any
sign engine that yields an odd sign-preserving scalar filter on the relevant
interval.

A simple quantitative bound is also available on spectrally gapped intervals.

\begin{proposition}[quantitative Newton--Schulz error on a gapped interval]
\label{prop:NSerr}
Let \(p_m\) be defined by \eqref{eq:pm}, and let \(\gamma\in(0,1]\).
For \(t\in[\gamma,1]\), define the scalar sign error
\[
\varepsilon_m(t)=1-p_m(t).
\]
Then \(p_m\) maps \([0,1]\) into itself, is increasing on \([0,1]\), and
\begin{equation}\label{eq:ns_scalar_recurrence}
\varepsilon_{m+1}(t)=\tfrac12\varepsilon_m(t)^2\bigl(3-\varepsilon_m(t)\bigr)
\le \tfrac32\varepsilon_m(t)^2.
\end{equation}
Consequently, if a symmetric matrix \(A\) has spectrum contained in
\([\!-1,-\gamma]\cup[\gamma,1]\), then
\begin{equation}\label{eq:NSoperr}
\norm{p_m(A)-\sgn(A)}_2
\le \max_{|u|\in[\gamma,1]} |p_m(u)-\sgn(u)|
= \varepsilon_m(\gamma).
\end{equation}
\end{proposition}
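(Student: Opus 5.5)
The argument reduces everything to the scalar map \(f(t)=\tfrac12 t(3-t^2)\), for which \(p_{m+1}=f\circ p_m\), and then lifts the scalar bound to matrices with Proposition~\ref{prop:spectralerr}.

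First I establish the structural claims by induction on \(m\). The derivative is \(f'(t)=\tfrac32(1-t^2)\), which is nonnegative on \([0,1]\), so \(f\) is nondecreasing there. Moreover \(f(0)=0\) and \(f(1)=1\), so \(f\) maps \([0,1]\) into itself; this also follows from the computation in the proof of Lemma~\ref{lem:NSsign}. Since \(p_0(t)=t\) is increasing, the composition \(p_m=f\circ\cdots\circ f\) of nondecreasing self-maps of \([0,1]\) maps \([0,1]\) into itself and is nondecreasing. If strict increase is wanted, note that \(f'>0\) on \([0,1)\). Then \(f\) is strictly increasing on \([0,1]\), and the composition inherits strict monotonicity.

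Next I derive the error recurrence by substituting \(p_m=1-\varepsilon\) into \(f\):
\[
1-f(1-\varepsilon)=1-\tfrac12(1-\varepsilon)\bigl(2+2\varepsilon-\varepsilon^2\bigr)
=\tfrac12\varepsilon^2(3-\varepsilon).
\]
This is a direct expansion to be checked. Because \(p_m(t)\in[0,1]\), we have \(\varepsilon_m(t)\in[0,1]\). Hence \(3-\varepsilon_m\le 3\), which gives the bound \(\tfrac32\varepsilon_m^2\).

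For the operator bound, I apply Proposition~\ref{prop:spectralerr} with \(\phi=p_m\). This gives \(\norm{p_m(A)-\sgn(A)}_2=\max_i|p_m(\mu_i)-\sgn(\mu_i)|\). Every \(\mu_i\) satisfies \(|\mu_i|\in[\gamma,1]\), so this maximum is at most the supremum of \(|p_m(u)-\sgn(u)|\) over \(|u|\in[\gamma,1]\). Each \(p_m\) is an odd polynomial, so \(|p_m(-u)-\sgn(-u)|=|p_m(u)-\sgn(u)|\), and it suffices to take \(u\in[\gamma,1]\). There \(p_m(u)\in[0,1]\), so the error equals \(1-p_m(u)=\varepsilon_m(u)\). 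Since \(p_m\) is increasing, \(\varepsilon_m\) is decreasing, and its maximum over \([\gamma,1]\) is attained at \(u=\gamma\). This gives the equality with \(\varepsilon_m(\gamma)\).

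I do not expect a real obstacle. The only points needing care are the algebraic identity for \(1-f(1-\varepsilon)\) and keeping track of the invariance \(p_m([0,1])\subset[0,1]\), since this invariance justifies both \(\varepsilon_m\ge 0\) and the removal of the absolute value.
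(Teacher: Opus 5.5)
Your proposal is correct and follows the paper's proof essentially step for step. You use induction with the self-map $f(t)=\tfrac12 t(3-t^2)$ of $[0,1]$ and $f'\ge 0$, substitute $p_m=1-\varepsilon$ to get the recurrence, and then use oddness and monotonicity to reduce the operator bound, via Proposition~\ref{prop:spectralerr}, to the value at $u=\gamma$. You spell out a few points the paper leaves implicit (the expanded algebra, strict monotonicity, and why the maximum is attained at $u=\gamma$), but the route is the same.
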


\begin{proof}
Write \(f(t)=\tfrac12 t(3-t^2)\), so \(p_{m+1}=f\circ p_m\).  Since
\(f([0,1])\subset[0,1]\) and \(f'(t)=\tfrac32(1-t^2)\ge 0\) on \([0,1]\),
induction shows that every \(p_m\) maps \([0,1]\) into itself and is increasing.
Now let \(e=\varepsilon_m(t)=1-p_m(t)\in[0,1]\).  Substituting
\(p_m(t)=1-e\) into \eqref{eq:pm} gives
\[
\varepsilon_{m+1}(t)=1-\tfrac12(1-e)\bigl(3-(1-e)^2\bigr)
=\tfrac12 e^2(3-e),
\]
which yields \eqref{eq:ns_scalar_recurrence}.  Because \(p_m\) is odd, the
pointwise sign error \(|p_m(u)-\sgn(u)|\) depends only on \(|u|\) and is largest
at the smallest admissible magnitude \(\gamma\).  Applying the spectral error
formula \eqref{eq:spectralerr} therefore gives \eqref{eq:NSoperr}.
\end{proof}

Proposition~\ref{prop:NSerr} makes the shift-separation effect explicit for
finite-step Newton--Schulz.  After scaling the shifted Hessian into \([\!-1,1]\),
the relevant quantity is the smallest absolute shifted eigenvalue.  A larger
spectral margin means a smaller worst-case scalar sign error and hence a smaller
reflector error.  This is the quantitative perspective behind the midpoint
principle developed in the next section.

\begin{discussion}[spectral compression in stiff semidiscrete problems]
\label{disc:spectral_compression}
It is useful to isolate the approximation-theoretic mechanism behind the
Newton--Schulz depth requirement.  Let
\[
\gamma=\min_i |\alpha(\lambda_i(H)-s)|
\]
be the scaled spectral margin of the shifted Hessian.  Under the conservative
choice \(\alpha=\|H-sI\|_2^{-1}\), one has
\[
\gamma=
\frac{\min_i |\lambda_i(H)-s|}{\|H-sI\|_2}.
\]
Proposition~\ref{prop:NSerr} shows that the worst-case reflector error is
controlled by the scalar quantity \(\varepsilon_m(\gamma)=1-p_m(\gamma)\).
Equivalently, for a prescribed reflector tolerance \(\varepsilon\), the
smallest admissible Newton--Schulz depth is determined by the condition
\(\varepsilon_m(\gamma)\le \varepsilon\), which becomes rapidly more demanding
as \(\gamma\downarrow 0\).  For small positive \(t\),
\[
p_{m+1}(t)=\tfrac32 t + O(t^3),
\]
so Newton--Schulz first passes through a near-linear amplification regime before
it reaches the quadratic regime near \(1\).  Consequently, small values of
\(\gamma\) force substantially larger filter depths even when the sign pattern is
already correct.  In semidiscrete PDE settings, safe scalings typically satisfy
\(\alpha=O(\|H-sI\|_2^{-1})\); for second-order operators, \(\|H\|_2\) often
grows like \(h^{-2}\), so the scaled gap can be strongly compressed as the
discretization is refined.  This mechanism explains why shallow polynomial sign
filters deteriorate in the Allen--Cahn experiments of Section~5.
\end{discussion}

\subsection{Other admissible sign engines and why Newton--Schulz is the main baseline}

The sign-preserving viewpoint is deliberately broader than one iteration.
At one extreme, one may evaluate the shifted sign exactly from a full spectral
or Schur decomposition; this provides the highest-fidelity reflector baseline
but is usually the most expensive option in dense arithmetic.
At the other extreme, one may use low-degree odd polynomial filters that merely
preserve sign on the relevant shifted spectrum; these are analytically simple
but may require more outer iterations because the reflector error is larger.
Between these extremes lie rational and Newton-like sign iterations discussed in
standard matrix-function references
\cite{HighamBook,KenneyLaub1991PolarSignCond,KenneyLaubSign}.  The matrix sign
function is also a classical tool for invariant-subspace computation and related
divide-and-conquer strategies
\cite{BaiDemmel1998SignInvariant,ByersHeMehrmann1997SignInvariant}.  Their
appeal depends on which kernels are cheapest in the target environment.

We single out Newton--Schulz for the numerical part of the paper for a very
specific reason.  In the explicit-Hessian regime emphasized here, scaled
Newton--Schulz is inversion-free and dominated by dense matrix--matrix products,
so its computational profile aligns naturally with BLAS-3 and accelerator/GPU
implementations \cite{ChenChow}.  The paper therefore uses Newton--Schulz as the
principal sign engine, while keeping the theory at the level of generic
sign-preserving filters so that alternative realizations remain admissible.

\section{Shift placement, admissibility, and algorithmic realization}

The second design variable is the shift itself.  This section turns the midpoint
heuristic into a small collection of precise lemmas and explains how the theory
translates into an implementation.

\subsection{The midpoint shift}

\begin{proposition}[midpoint optimality]\label{prop:mid}
Let $\lambda_k<\lambda_{k+1}$ and define
\[
\delta(s)=\min\{s-\lambda_k,\lambda_{k+1}-s\},
\qquad s\in(\lambda_k,\lambda_{k+1}).
\]
Then $\delta(s)$ is uniquely maximized at
\begin{equation}\label{eq:midpoint}
s_\star = \frac{\lambda_k+\lambda_{k+1}}{2},
\end{equation}
with maximum value
\[
\delta(s_\star)=\frac{\lambda_{k+1}-\lambda_k}{2}.
\]
\end{proposition}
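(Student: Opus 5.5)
The plan is to treat $\delta$ as the minimum of two affine functions of $s$, one increasing and one decreasing, and to compare it with their common value at the crossing point. Write $g(s)=s-\lambda_k$ and $h(s)=\lambda_{k+1}-s$, so that $\delta(s)=\min\{g(s),h(s)\}$. Both are positive on the open interval $(\lambda_k,\lambda_{k+1})$. The function $g$ is strictly increasing, $h$ is strictly decreasing, and $g(s)+h(s)=\lambda_{k+1}-\lambda_k$ is constant. Everything follows from these three facts.

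First I will obtain the upper bound directly from the constant sum. The minimum of two numbers never exceeds their average, so
\[
\delta(s)\le \tfrac12\bigl(g(s)+h(s)\bigr)=\tfrac12(\lambda_{k+1}-\lambda_k)
\]
for every admissible $s$. Next I will show the bound is attained. At $s_\star=(\lambda_k+\lambda_{k+1})/2$ one has $g(s_\star)=h(s_\star)=(\lambda_{k+1}-\lambda_k)/2$. Hence $\delta(s_\star)$ equals the bound, which gives both the maximality of $s_\star$ and the stated maximum value.

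For uniqueness I will use the fact that equality $\min\{a,b\}=(a+b)/2$ holds only when $a=b$. So $\delta(s)=\delta(s_\star)$ forces $g(s)=h(s)$, and this linear equation has the single solution $s=s_\star$. Alternatively, one can split into the cases $s<s_\star$ and $s>s_\star$:
\begin{itemize}
\item if $s<s_\star$, then $\delta(s)=g(s)<g(s_\star)$;
\item if $s>s_\star$, then $\delta(s)=h(s)<h(s_\star)$.
\end{itemize}
In each case the strict monotonicity of $g$ or $h$ gives the strict inequality.

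There is no real obstacle here. The only point requiring care is uniqueness, which needs the strict inequality $\min\{a,b\}<(a+b)/2$ for $a\ne b$, and this is immediate.
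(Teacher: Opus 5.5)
Your proposal is correct and follows essentially the same route as the paper, which observes that $\delta$ is the minimum of an increasing and a decreasing affine function and is therefore maximized where they cross, $s-\lambda_k=\lambda_{k+1}-s$; your case split $s<s_\star$ versus $s>s_\star$ is that argument made explicit. Your averaging bound $\min\{g,h\}\le\tfrac12(g+h)=\tfrac12(\lambda_{k+1}-\lambda_k)$, with strict inequality when $g\ne h$, neatly certifies both the maximum value and uniqueness, but it does not change the substance.
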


\begin{proof}
The function $\delta(s)$ is the minimum of one affine increasing function and
one affine decreasing function.
It is therefore maximized when the two are equal, namely when
$s-\lambda_k=\lambda_{k+1}-s$.
\end{proof}

Proposition~\ref{prop:mid} explains why the midpoint is the natural default.
The singular point of the sign function is zero; among all admissible shifts,
the midpoint pushes the shifted eigenvalues farthest from zero in the worst
case.  For finite-step sign filters, this is the cleanest local margin against
poor scalar approximation near the origin.

\begin{figure}[t]
\centering
\includegraphics[width=0.63\linewidth]{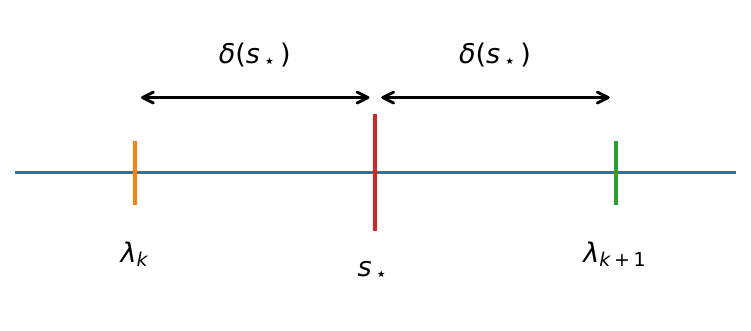}
\caption{For a fixed target gap $(\lambda_k,\lambda_{k+1})$, the midpoint shift
maximizes the minimum distance from $s$ to the two gap endpoints.  This is the
largest worst-case separation from the singular point of the sign function.}
\label{fig:midgap}
\end{figure}

\subsection{Inexact gap information and warm starts}

In practice, the gap endpoints are estimated rather than known exactly.
The following deterministic certificate records the two margin checks used by
the midpoint and reuse/refresh policies.

\begin{proposition}[inexact midpoint and reuse/refresh admissibility]
\label{prop:reuse_refresh}
At an iterate $x_n$, write
\[
\lambda_{i,n}=\lambda_i(H(x_n)),\qquad i=k,k+1,
\]
and suppose that endpoint estimates satisfy
\[
|\widehat\lambda_{k,n}-\lambda_{k,n}|\le \eps_n,\qquad
|\widehat\lambda_{k+1,n}-\lambda_{k+1,n}|\le \eps_n.
\]
For a candidate shift $s_n$, define its estimated margin
\[
\widehat m_n=
\min\{s_n-\widehat\lambda_{k,n},
       \widehat\lambda_{k+1,n}-s_n\}.
\]
If $\widehat m_n>\eps_n$, then $s_n$ is admissible at $x_n$, with true margin at
least $\widehat m_n-\eps_n$.  Moreover, if the next-step endpoint drift obeys
\[
\Delta_n :=
\max_{i=k,k+1}|\lambda_{i,n+1}-\lambda_{i,n}|
\le \delta_n < \widehat m_n-\eps_n,
\]
then the same shift $s_n$ remains admissible at $x_{n+1}$, with true margin at
least $\widehat m_n-\eps_n-\delta_n$.  In particular, the drift condition holds
whenever $\|H(x_{n+1})-H(x_n)\|_2\le \delta_n$, by Weyl's eigenvalue
perturbation bound.

Finally, if at $x_{n+1}$ the true gap
$g_{n+1}=\lambda_{k+1,n+1}-\lambda_{k,n+1}$ is positive and fresh endpoint
estimates have error at most $\eps_{n+1}<g_{n+1}/2$, then the estimated midpoint
\[
\widehat s_{n+1}
=\frac{\widehat\lambda_{k,n+1}+\widehat\lambda_{k+1,n+1}}{2}
\]
is admissible at $x_{n+1}$.
\end{proposition}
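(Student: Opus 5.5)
The plan is to reduce all three claims to elementary triangle-inequality estimates on the two endpoint distances $s-\lambda_{k}$ and $\lambda_{k+1}-s$. Throughout, ``admissible at $x$'' means $s\in(\lambda_k(H(x)),\lambda_{k+1}(H(x)))$. In view of Proposition~\ref{prop:exact}, this is exactly the condition under which $\sgn(H(x)-sI)=I-2P_k(x)$. The true margin is $\min\{s-\lambda_k,\lambda_{k+1}-s\}$, and admissibility is equivalent to this margin being positive.

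For the first claim, I will write $s_n-\lambda_{k,n}=(s_n-\widehat\lambda_{k,n})+(\widehat\lambda_{k,n}-\lambda_{k,n})$. Since $|\widehat\lambda_{k,n}-\lambda_{k,n}|\le\eps_n$, this gives $s_n-\lambda_{k,n}\ge \widehat m_n-\eps_n$. Symmetrically, $\lambda_{k+1,n}-s_n\ge \widehat m_n-\eps_n$. Taking the minimum shows the true margin is at least $\widehat m_n-\eps_n>0$, which proves admissibility.

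For the reuse claim, I will repeat the argument at $x_{n+1}$:
\[
s_n-\lambda_{k,n+1}\ge (s_n-\lambda_{k,n})-|\lambda_{k,n+1}-\lambda_{k,n}|\ge \widehat m_n-\eps_n-\Delta_n,
\]
and the same bound holds for $\lambda_{k+1,n+1}-s_n$. Using $\Delta_n\le\delta_n<\widehat m_n-\eps_n$ gives a positive margin of at least $\widehat m_n-\eps_n-\delta_n$. The Weyl remark follows from $|\lambda_i(A)-\lambda_i(B)|\le\|A-B\|_2$ for symmetric $A,B$ with ordered eigenvalues, applied with $i=k,k+1$; this yields $\Delta_n\le\delta_n$.

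For the final claim, I will write $\widehat\lambda_{i,n+1}=\lambda_{i,n+1}+e_i$ with $|e_i|\le\eps_{n+1}$. Then
\[
\widehat s_{n+1}-\lambda_{k,n+1}=\tfrac{g_{n+1}}2+\tfrac{e_k+e_{k+1}}2\ge \tfrac{g_{n+1}}2-\eps_{n+1}>0,
\]
and similarly $\lambda_{k+1,n+1}-\widehat s_{n+1}\ge g_{n+1}/2-\eps_{n+1}>0$. This also shows the true margin is within $\eps_{n+1}$ of the optimal value $g_{n+1}/2$ from Proposition~\ref{prop:mid}.

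I do not expect a real obstacle in any of these steps. The only points needing care are strict versus non-strict inequalities, and making sure the Weyl bound is applied to eigenvalues with matching ordered indices.
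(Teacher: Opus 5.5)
Your proposal is correct and follows essentially the same route as the paper's proof. Both use triangle-inequality bounds on the two endpoint distances, an additional loss of at most $\delta_n$ from the drift (justified by Weyl's bound for ordered eigenvalues), and the margin $g_{n+1}/2-\eps_{n+1}$ for the estimated midpoint, which you simply spell out explicitly through the errors $e_k, e_{k+1}$.
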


\begin{proof}
The first claim follows from
\[
s_n-\lambda_{k,n}
\ge s_n-\widehat\lambda_{k,n}-\eps_n
\ge \widehat m_n-\eps_n>0
\]
and
\[
\lambda_{k+1,n}-s_n
\ge \widehat\lambda_{k+1,n}-s_n-\eps_n
\ge \widehat m_n-\eps_n>0.
\]
If the endpoint drift is bounded by $\delta_n$, then the same inequalities at
$x_{n+1}$ lose at most another $\delta_n$, giving margin at least
$\widehat m_n-\eps_n-\delta_n$.  Weyl's bound gives
$|\lambda_i(H(x_{n+1}))-\lambda_i(H(x_n))|
\le \|H(x_{n+1})-H(x_n)\|_2$ for each $i$.  For the final statement, the
estimated midpoint has distance at least $(g_{n+1}-2\eps_{n+1})/2$ from each
true endpoint.
\end{proof}

Proposition~\ref{prop:reuse_refresh} gives the safety rule behind the
reuse/refresh policy: reuse a shift while the certified endpoint drift is
smaller than the available estimated margin, and refresh to a new midpoint once
that margin is no longer reliable but the target gap is still resolved.

\subsection{A formal discrete reflector algorithm}

We now state the discrete algorithm that corresponds to the filtered reflector
realization studied throughout the paper.

\begin{algorithm}[t]
\caption{Shifted-sign reflector iteration in the explicit-Hessian regime}
\label{alg:ssr}
\begin{algorithmic}[1]
\Require Target index $k$, initial iterate $x_0$, step sizes $\{\eta_n\}$,
         sign engine $\phi_m$, and shift policy.
\For{$n=0,1,2,\ldots$ until a stopping criterion is met}
  \State Form the current gradient and Hessian:
         \(g_n=\nabla E(x_n)\), \(H_n=\nabla^2E(x_n)\).
  \State Estimate the target-gap endpoints
         \(\widehat\lambda_k(H_n)\) and
         \(\widehat\lambda_{k+1}(H_n)\).
  \State Choose an admissible shift \(s_n\) by one of the following policies.
  \Statex \hspace{\algorithmicindent}\emph{midpoint:}
          \(s_n=(\widehat\lambda_k+\widehat\lambda_{k+1})/2\).
  \Statex \hspace{\algorithmicindent}\emph{damped midpoint:}
          blend the previous admissible shift with the new midpoint and clip
          back into the estimated gap.
  \Statex \hspace{\algorithmicindent}\emph{reuse/refresh:}
          keep the previous shift while its admissibility margin is above a
          prescribed threshold, and otherwise reset to the midpoint.
  \State Choose a scaling \(\alpha_n\) so that the relevant spectrum of
         \(\alpha_n(H_n-s_nI)\) lies inside the scalar design interval of the
         sign engine.
  \State Form the filtered reflector
         \(\widetilde R_n=\phi_m(\alpha_n(H_n-s_nI))\), for example by
         \(m\) Newton--Schulz steps, by an exact spectral sign, or by another
         odd sign-preserving filter.
  \State Update the state by
        \begin{equation}\label{eq:discrete_step}
        x_{n+1}=x_n-\eta_n\widetilde R_n g_n.
        \end{equation}
  \State Stop if \(\|g_{n+1}\|\) is below tolerance, the shift ceases to be
         admissible, or a maximum iteration budget is reached.
\EndFor
\end{algorithmic}
\end{algorithm}

Algorithm~\ref{alg:ssr} is deliberately modular.  The theory does not require a
particular sign engine; it only requires that the resulting scalar filter be odd
and sign-preserving on the relevant shifted spectrum.  Likewise, the shift
policy only needs to keep $s_n$ inside the target gap.  In this sense the
algorithmic object of the paper is the filtered reflector step
\eqref{eq:discrete_step}, not one special recurrence.

\paragraph{Recommended default realization}
In a practical dense explicit-Hessian implementation, a natural default is to
estimate \(\widehat\lambda_k\) and \(\widehat\lambda_{k+1}\) by a dense
eigensolver or by a partial eigensolver near the target gap, for example with
implicitly restarted Arnoldi/Lanczos or block preconditioned eigensolvers
\cite{DuerschShaoYangGu2018LOBPCG,Knyazev2001LOBPCG,Lehoucq2001IRAM,
LehoucqSorensenYang1998ARPACK}, choose the midpoint shift, take
\(\alpha_n=\|H_n-s_nI\|_2^{-1}\) when a spectral-norm estimate is available and
otherwise a cheaper surrogate such as
\(\|H_n-s_nI\|_\infty^{-1}\), apply \(m\)-step Newton--Schulz with moderate
depths such as \(m=2,4,6\), reuse the previous shift while the certified
endpoint drift remains below the available estimated margin in
Proposition~\ref{prop:reuse_refresh}, and refresh more aggressively when that
margin becomes small.  Keep a fixed outer step size unless the local bound in
Corollary~\ref{cor:discrete_delta} suggests a smaller value.  Section~5
instantiates the numerical tests in this spirit.

\begin{discussion}[scaling choice and filter efficiency]
The scaling $\alpha_n$ in Algorithm~\ref{alg:ssr} is a genuine algorithmic
design variable.  Choosing $\alpha_n$ close to $\|H_n-s_nI\|_2^{-1}$
(equivalently, to $\rho(H_n-s_nI)^{-1}$ in the symmetric setting) enlarges the
effective scaled margin
\[
\gamma_n=\min_i |\alpha_n(\lambda_i(H_n)-s_n)|
\]
and therefore improves the finite-step filter accuracy predicted by
Proposition~\ref{prop:NSerr}.  However, obtaining such a scaling may require
extra spectral estimation.  More conservative choices such as
$\alpha_n=\|H_n-s_nI\|_\infty^{-1}$ or
$\alpha_n=\|H_n-s_nI\|_1^{-1}$ are cheaper, but they reduce $\gamma_n$,
compress the scaled spectrum toward the origin, and can therefore increase the
Newton--Schulz depth required to achieve a given reflector tolerance.  The
scaling policy thus balances setup cost against sign-filter efficiency.
\end{discussion}

A natural comparison point is a simple HiSD-style baseline based on explicit
tracking of a $k$-dimensional unstable subspace.  Let
$Q_n\in\R^{d\times k}$ have orthonormal columns, so that $Q_nQ_n^T$ is the
orthogonal projector onto the current tracked subspace.  A basic projected
Rayleigh update with tracking step size $\tau_{\mathrm{trk}}>0$ is
\[
\widehat Q_{n+1}=Q_n-\tau_{\mathrm{trk}}(I-Q_nQ_n^T)H_nQ_n,
\qquad
Q_{n+1}=\operatorname{qf}(\widehat Q_{n+1}),
\]
where $\operatorname{qf}(\cdot)$ denotes the orthonormal factor in a QR-type
retraction.  The associated reflector for the next position update is then
taken as
\[
R_n^{\mathrm{trk}}=I-2Q_{n+1}Q_{n+1}^T,
\]
which flips the tracked $k$-dimensional subspace and leaves its orthogonal
complement unchanged.  The resulting $x$-update is the corresponding reflected
gradient step.  For $k=1$, the frame update reduces to the familiar
vector-tracking form, and the reflector becomes
$R_n^{\mathrm{trk}}=I-2v_{n+1}v_{n+1}^T$ after normalization.  This baseline
provides a transparent subspace-tracking comparison for the numerical section
and makes the algorithmic dependence on the target index $k$ explicit.

\subsection{A local discrete stability theorem}

The local analysis in Sections~2--3 is continuous-time.  We now connect it to
Algorithm~\ref{alg:ssr}.

\begin{theorem}[local discrete stability of the index-$k$ filtered reflector iteration]
\label{thm:discrete}
Let $x_\star$ be a nondegenerate critical point of $E$ with Hessian
$H_\star=\nabla^2E(x_\star)$ and Morse index $j$.
Fix a target index $k$, and let $s(x)$ and $\alpha(x)$ be continuous scalar
policies in a neighborhood of $x_\star$ with
\[
s_\star:=s(x_\star)\in(\lambda_k(H_\star),\lambda_{k+1}(H_\star)),
\qquad \alpha_\star:=\alpha(x_\star)>0.
\]
Assume that every scaled shifted eigenvalue
$\alpha_\star(\lambda_i(H_\star)-s_\star)$ lies in the scalar design interval of
an odd sign-preserving filter $\phi$, and that, for $x$ sufficiently near
$x_\star$, the spectrum of $\alpha(x)(\nabla^2E(x)-s(x)I)$ remains in that
interval.
Consider the discrete map
\begin{equation}\label{eq:discrete_map}
F_\eta(x)=x-\eta\,\phi\!\bigl(\alpha(x)(\nabla^2E(x)-s(x) I)\bigr)\nabla E(x).
\end{equation}
Then the Jacobian of $F_\eta$ at $x_\star$ is diagonalizable in an orthonormal
eigenbasis of $H_\star$ and has eigenvalues
\begin{equation}\label{eq:discrete_eval}
\mu_i(\eta)=1-\eta\,\phi\!\bigl(\alpha_\star(\lambda_i(H_\star)-s_\star)\bigr)
\lambda_i(H_\star),
\qquad i=1,\ldots,d.
\end{equation}
If $j=k$, then every factor in \eqref{eq:discrete_eval} is positive and
$x_\star$ is a locally linearly attracting fixed point whenever
\begin{equation}\label{eq:eta_condition}
0<\eta<\frac{2}{\max_i\phi\!\bigl(\alpha_\star(\lambda_i(H_\star)-s_\star)\bigr)
\lambda_i(H_\star)}.
\end{equation}
If $j\neq k$, then for every $\eta>0$ at least $|j-k|$ eigenvalues of
$DF_\eta(x_\star)$ satisfy $|\mu_i(\eta)|>1$, so the fixed point is locally
unstable.
\end{theorem}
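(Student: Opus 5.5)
The plan is to differentiate the map \eqref{eq:discrete_map} at the fixed point and use $\nabla E(x_\star)=0$ to discard every term that differentiates the reflector. Write $\widetilde R(x)=\phi\bigl(\alpha(x)(\nabla^2E(x)-s(x)I)\bigr)$, so that $F_\eta(x)=x-\eta\widetilde R(x)\nabla E(x)$. Then for $x=x_\star+\xi$,
\[
\widetilde R(x)\nabla E(x)=\widetilde R(x)\bigl(H_\star\xi+o(\norm{\xi})\bigr).
\]
Only continuity of $\widetilde R$ at $x_\star$ is needed, since $\bigl(\widetilde R(x)-\widetilde R(x_\star)\bigr)H_\star\xi=o(\norm{\xi})$. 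Continuity comes from three facts: $s$ and $\alpha$ are continuous, $\nabla^2E$ is continuous (assuming $E\in C^2$), and $\phi$ is continuous on the design interval, which contains the relevant spectra near $x_\star$. A continuous scalar function on a compact interval gives a continuous matrix function on symmetric matrices whose spectrum lies in that interval. One can see this by uniformly approximating $\phi$ by polynomials (Weierstrass) and using that polynomial matrix functions are continuous. This gives $DF_\eta(x_\star)=I-\eta\widetilde R_\star H_\star$ with $\widetilde R_\star=\widetilde R(x_\star)$, without needing differentiability of $\phi$. I regard this justification as the only real obstacle; the rest is spectral bookkeeping.

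Next, diagonalize exactly as in the proof of Theorem~\ref{thm:generic}: $\widetilde R_\star=Q\diag\bigl(\phi(\alpha_\star(\lambda_i-s_\star))\bigr)Q^T$ with the same orthonormal $Q$ as $H_\star$. Hence $DF_\eta(x_\star)=Q\diag(\mu_i(\eta))Q^T$, which is \eqref{eq:discrete_eval}, and the Jacobian is symmetric. Set $c_i=\phi(\alpha_\star(\lambda_i-s_\star))\lambda_i$. By the sign-preservation property \eqref{eq:signpreserving}, $\sgn(c_i)=\sgn(\lambda_i-s_\star)\sgn(\lambda_i)$, using $\lambda_i\ne0$ by nondegeneracy and $\lambda_i\ne s_\star$ by admissibility. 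The case split of Proposition~\ref{prop:local} then applies. If $j=k$, all $c_i>0$; here I read "every factor is positive" as positivity of the $c_i$. Condition \eqref{eq:eta_condition} is exactly $0<\eta c_i<2$ for all $i$, that is, $|\mu_i|<1$. So the spectral radius of the symmetric Jacobian is below $1$, and the standard linearization principle for $C^1$ maps (or just differentiability at the fixed point, via $\norm{DF}_2<1$ and a small ball) gives local linear attraction.

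If $j\ne k$, the same case split yields exactly $|j-k|$ indices with $c_i<0$: these are $i=j+1,\dots,k$ when $j<k$, and $i=k+1,\dots,j$ when $j>k$. For those indices $\mu_i=1+\eta|c_i|>1$ for every $\eta>0$. To conclude instability, I will invoke the standard fact that a fixed point of a differentiable map with a Jacobian eigenvalue of modulus greater than $1$ is not Lyapunov stable. This can be argued on the symmetric linearization along the corresponding eigenvector, where the linear part expands by the factor $\min\mu_i>1$ and dominates the $o(\norm{\xi})$ remainder in a small enough neighborhood. If full rigor is needed, I would cite the discrete unstable-manifold or linearization instability theorem for $C^1$ maps, which requires $\nabla^2E$ to be $C^1$ there. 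I would state that regularity assumption explicitly as part of the setting.
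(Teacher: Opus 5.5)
Your proposal is correct and follows essentially the same route as the paper. Continuity of the filtered reflector at $x_\star$ together with $\nabla E(x_\star)=0$ gives $DF_\eta(x_\star)=I-\eta\widetilde R_\star H_\star$; simultaneous diagonalization with $H_\star$ yields the $\mu_i(\eta)$; and the sign case split of Proposition~\ref{prop:local} gives $|\mu_i|<1$ for $j=k$ and exactly $|j-k|$ eigenvalues $\mu_i=1+\eta|c_i|>1$ otherwise.

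Your additions are sound: polynomial approximation to justify continuity of the functional calculus, and an explicit expansion argument for instability, which the paper only asserts. The instability step is better phrased by tracking the component of $\xi$ in the span of all eigenvectors with $|\mu_i|>1$, via an invariant-cone estimate. Working along a single eigenvector does not suffice, since the iterates do not stay on it.
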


\begin{proof}
Write
\[
R(x)=\phi\!\bigl(\alpha(x)(\nabla^2E(x)-s(x)I)\bigr).
\]
The continuity assumptions and the continuity of the spectral functional
calculus for symmetric matrices give $R(x)\to R_\star:=R(x_\star)$ as
$x\to x_\star$.  For $h\to0$,
\[
\nabla E(x_\star+h)=H_\star h+o(\|h\|),
\]
and hence
\[
R(x_\star+h)\nabla E(x_\star+h)
=R_\star H_\star h+o(\|h\|).
\]
Thus $F_\eta$ is differentiable at $x_\star$ and
\[
DF_\eta(x_\star)=I-\eta\,\phi\!\bigl(\alpha_\star(H_\star-s_\star I)\bigr)H_\star.
\]
Since $\phi\bigl(\alpha_\star(H_\star-s_\star I)\bigr)$ is a spectral function of
$H_\star$, the two matrices are simultaneously diagonalizable in an orthonormal
eigenbasis of $H_\star$, which yields \eqref{eq:discrete_eval}.  If $j=k$, then
the gap
$(\lambda_k(H_\star),\lambda_{k+1}(H_\star))$ straddles zero and the
sign-preserving property implies
\[
\phi\!\bigl(\alpha_\star(\lambda_i(H_\star)-s_\star)\bigr)\lambda_i(H_\star)>0
\qquad\text{for all }i.
\]
Condition \eqref{eq:eta_condition} then gives $|\mu_i(\eta)|<1$ for every $i$,
so the fixed point is locally linearly attracting.  If $j\neq k$, then the same
sign comparison used in Proposition~\ref{prop:local} shows that exactly $|j-k|$
of the products
$\phi\bigl(\alpha_\star(\lambda_i(H_\star)-s_\star)\bigr)\lambda_i(H_\star)$ are
negative.  For those indices,
$\mu_i(\eta)=1+\eta|\phi(\alpha_\star(\lambda_i(H_\star)-s_\star))\lambda_i(H_\star)|>1$,
which proves local instability.
\end{proof}

\begin{corollary}[a near-sign step-size window]\label{cor:discrete_delta}
Under the hypotheses of Theorem~\ref{thm:discrete}, assume in addition that
$j=k$ and define
\[
\delta_\star:=\max_i\Bigl|\phi\bigl(\alpha_\star(\lambda_i(H_\star)-s_\star)\bigr)
-\sgn\bigl(\lambda_i(H_\star)-s_\star\bigr)\Bigr|<1.
\]
Then, for every $i=1,\ldots,d$,
\[
(1-\delta_\star)|\lambda_i(H_\star)|
\le
\phi\bigl(\alpha_\star(\lambda_i(H_\star)-s_\star)\bigr)\lambda_i(H_\star)
\le
(1+\delta_\star)|\lambda_i(H_\star)|.
\]
Consequently, the local attraction condition \eqref{eq:eta_condition} is implied by
\[
0<\eta<\frac{2}{(1+\delta_\star)\|H_\star\|_2}.
\]
In particular, for filter families whose local spectral error $\delta_\star$
decays with the filter depth, the admissible discrete step-size window tends to
that of the exact reflector iteration.
\end{corollary}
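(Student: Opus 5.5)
The argument is a scalar, eigenvalue-by-eigenvalue comparison, followed by substituting into the step-size condition \eqref{eq:eta_condition} of Theorem~\ref{thm:discrete}. Throughout, write $\lambda_i=\lambda_i(H_\star)$, $\sigma_i=\sgn(\lambda_i-s_\star)$, and $\phi_i=\phi(\alpha_\star(\lambda_i-s_\star))$, so that $|\phi_i-\sigma_i|\le\delta_\star$ for every $i$ by definition of $\delta_\star$.

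\textbf{Step 1: use $j=k$ to identify the signs.} Because $j=k$ and $s_\star\in(\lambda_k,\lambda_{k+1})$, the target gap straddles zero, exactly as in the proof of Theorem~\ref{thm:discrete}. Hence $\sigma_i=\sgn(\lambda_i)$ for every $i$, and $\sigma_i\lambda_i=|\lambda_i|$. Nondegeneracy gives $\lambda_i\neq0$, and admissibility of the shift gives $\lambda_i\neq s_\star$, so both signs are well defined and nonzero.

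\textbf{Step 2: the two-sided bound.} Decompose
\[
\phi_i\lambda_i=\sigma_i\lambda_i+(\phi_i-\sigma_i)\lambda_i=|\lambda_i|+(\phi_i-\sigma_i)\lambda_i .
\]
Since $|(\phi_i-\sigma_i)\lambda_i|\le\delta_\star|\lambda_i|$, this immediately yields
\[
(1-\delta_\star)|\lambda_i|\le\phi_i\lambda_i\le(1+\delta_\star)|\lambda_i| .
\]
The hypothesis $\delta_\star<1$ makes the lower bound strictly positive, which is consistent with the positivity claim in Theorem~\ref{thm:discrete}. This positivity is also what makes the denominator in \eqref{eq:eta_condition} a positive maximum.

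\textbf{Step 3: the step-size window.} Taking the maximum over $i$ in the upper bound gives
\[
\max_i\phi_i\lambda_i\le(1+\delta_\star)\max_i|\lambda_i|=(1+\delta_\star)\|H_\star\|_2,
\]
where the last equality holds because $H_\star$ is symmetric. Therefore
\[
\frac{2}{(1+\delta_\star)\|H_\star\|_2}\le\frac{2}{\max_i\phi_i\lambda_i},
\]
so any $\eta$ satisfying the new condition also satisfies \eqref{eq:eta_condition}.

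\textbf{Step 4: the limiting remark.} If $\delta_\star\to0$ as the filter depth grows, the window tends to $2/\|H_\star\|_2$. This is exactly the window for the exact reflector: when $\phi=\sgn$, the products $\phi_i\lambda_i$ equal $|\lambda_i|$, and their maximum is $\|H_\star\|_2$.

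No step is a genuine obstacle. The only point needing care is Step 1, namely the sign identification $\sigma_i=\sgn(\lambda_i)$, which relies on $j=k$. Without it, $\phi_i\lambda_i$ could be negative, and the lower bound in Step 2 would fail.
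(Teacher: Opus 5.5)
Your proposal is correct and follows the same route as the paper's proof. Both use $j=k$ to get $\sgn(\lambda_i(H_\star)-s_\star)\lambda_i(H_\star)=|\lambda_i(H_\star)|$, bound the remainder by $\delta_\star|\lambda_i(H_\star)|$, and then use $\max_i|\lambda_i(H_\star)|=\|H_\star\|_2$ to show the new window lies inside \eqref{eq:eta_condition}; you simply write out the details that the paper's three-line proof leaves implicit.
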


\begin{proof}
At a target index-$k$ saddle one has
$\sgn(\lambda_i(H_\star)-s_\star)\lambda_i(H_\star)=|\lambda_i(H_\star)|$ for every
$i$.  The error bound defining $\delta_\star$ therefore implies the two-sided
estimate above.  The stated step-size condition is then stronger than
\eqref{eq:eta_condition}, hence sufficient by Theorem~\ref{thm:discrete}.
\end{proof}

\begin{proposition}[local contraction robustness under reflector error]
\label{prop:local_contraction}
Let $x_\star$ be a nondegenerate critical point of $E$ with Morse index $k$,
let $H_\star=\nabla^2E(x_\star)$, and choose an admissible shift
$s_\star\in(\lambda_k(H_\star),\lambda_{k+1}(H_\star))$.  Set
\[
R_\star=\sgn(H_\star-s_\star I).
\]
For any symmetric reflector surrogate $\widetilde R_\star$, define the exact
and approximate frozen maps
\[
F_\eta^{\mathrm{ex}}(x)=x-\eta R_\star \nabla E(x),
\qquad
F_\eta^{\mathrm{ap}}(x)=x-\eta \widetilde R_\star \nabla E(x).
\]
Then
\begin{equation}\label{eq:jacobian_perturb}
\norm{DF_\eta^{\mathrm{ap}}(x_\star)-DF_\eta^{\mathrm{ex}}(x_\star)}_2
\le \eta \norm{\widetilde R_\star-R_\star}_2 \norm{H_\star}_2.
\end{equation}
If, in addition,
\[
0<\eta<\frac{2}{\norm{H_\star}_2}
\]
and
\[
\eta \norm{\widetilde R_\star-R_\star}_2 \norm{H_\star}_2
<
\beta_\eta,
\qquad
\beta_\eta:=1-\max_i \bigl|1-\eta |\lambda_i(H_\star)|\bigr|,
\]
then $F_\eta^{\mathrm{ap}}$ is locally linearly attracting at $x_\star$.
\end{proposition}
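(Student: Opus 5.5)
The plan has two parts: compute the Jacobians of the two frozen maps and bound their difference, then treat the approximate Jacobian as a perturbation of the exact one.

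\emph{Jacobians and the bound \eqref{eq:jacobian_perturb}.} Both reflectors are frozen constant matrices, so differentiating gives
\[
DF_\eta^{\mathrm{ex}}(x_\star)=I-\eta R_\star H_\star,
\qquad
DF_\eta^{\mathrm{ap}}(x_\star)=I-\eta\widetilde R_\star H_\star .
\]
Their difference is $-\eta(\widetilde R_\star-R_\star)H_\star$. Submultiplicativity of the spectral norm then yields \eqref{eq:jacobian_perturb} at once.

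\emph{Contraction of the exact Jacobian.} Because the Morse index equals $k$ and $s_\star$ lies in the target gap, Proposition~\ref{prop:exact} and the sign comparison in Proposition~\ref{prop:local} give
\[
R_\star H_\star = Q\diag(|\lambda_i|)Q^T .
\]
So $J_{\mathrm{ex}}:=DF_\eta^{\mathrm{ex}}(x_\star)$ is \emph{symmetric}, with eigenvalues $1-\eta|\lambda_i|$. Its spectral norm is therefore exactly $\max_i|1-\eta|\lambda_i||=1-\beta_\eta$. Nondegeneracy gives $|\lambda_i|>0$, and together with $\eta<2/\|H_\star\|_2$ this places every $\eta|\lambda_i|$ in $(0,2)$. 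Hence $\beta_\eta>0$, and the exact map is a strict contraction in the Euclidean norm.

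\emph{Perturbation step (the main obstacle).} The approximate Jacobian $\widetilde R_\star H_\star$ need not be symmetric, and it need not commute with $H_\star$, because $\widetilde R_\star$ is an arbitrary symmetric surrogate. So I cannot diagonalize it in the eigenbasis of $H_\star$ as in Theorem~\ref{thm:discrete}. I will avoid eigenvalue perturbation altogether and work with norms, which is exactly why symmetry of $J_{\mathrm{ex}}$ matters. By the triangle inequality,
\[
\|DF_\eta^{\mathrm{ap}}(x_\star)\|_2\le \|J_{\mathrm{ex}}\|_2+\eta\|\widetilde R_\star-R_\star\|_2\|H_\star\|_2<(1-\beta_\eta)+\beta_\eta=1 .
\]
Since the spectral radius is at most the operator norm, $\rho(DF_\eta^{\mathrm{ap}}(x_\star))<1$.

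\emph{Conclusion.} For local linear attraction I will use the differentiability of $F_\eta^{\mathrm{ap}}$ near $x_\star$. Writing $F_\eta^{\mathrm{ap}}(x_\star+h)-x_\star = DF_\eta^{\mathrm{ap}}(x_\star)h+o(\|h\|)$, the strict norm bound $q:=\|DF_\eta^{\mathrm{ap}}(x_\star)\|_2<1$ gives
\[
\|F_\eta^{\mathrm{ap}}(x_\star+h)-x_\star\|_2\le \tfrac{1+q}{2}\,\|h\|_2
\]
for all sufficiently small $h$. This is local linear convergence of the iterates to $x_\star$.
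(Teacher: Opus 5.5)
Your proposal is correct and follows essentially the same route as the paper: compute both Jacobians, identify the exact one as $I-\eta|H_\star|$ with spectral norm $1-\beta_\eta$, and conclude by the triangle inequality that $\|DF_\eta^{\mathrm{ap}}(x_\star)\|_2<1$. The only differences are details the paper leaves implicit, which you spell out: that nondegeneracy ($|\lambda_i|>0$) is what makes $\beta_\eta>0$, and the explicit $\tfrac{1+q}{2}$ neighborhood-contraction argument that turns the norm bound into local linear attraction.
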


\begin{proof}
Because $\nabla E(x_\star)=0$,
\[
DF_\eta^{\mathrm{ex}}(x_\star)=I-\eta R_\star H_\star,
\qquad
DF_\eta^{\mathrm{ap}}(x_\star)=I-\eta \widetilde R_\star H_\star.
\]
At a Morse-index-$k$ critical point, Proposition~\ref{prop:exact} gives
$R_\star H_\star=|H_\star|$, so
\[
DF_\eta^{\mathrm{ex}}(x_\star)=I-\eta |H_\star|.
\]
Subtracting the two Jacobians yields
\[
DF_\eta^{\mathrm{ap}}(x_\star)-DF_\eta^{\mathrm{ex}}(x_\star)
=-\eta (\widetilde R_\star-R_\star)H_\star,
\]
which implies \eqref{eq:jacobian_perturb}.  The step-size condition
$0<\eta<2/\norm{H_\star}_2$ ensures $\beta_\eta>0$ and
\[
\norm{DF_\eta^{\mathrm{ex}}(x_\star)}_2
=\max_i \bigl|1-\eta |\lambda_i(H_\star)|\bigr|
=1-\beta_\eta.
\]
Therefore
\[
\norm{DF_\eta^{\mathrm{ap}}(x_\star)}_2
\le
\norm{DF_\eta^{\mathrm{ex}}(x_\star)}_2
\;+\;
\norm{DF_\eta^{\mathrm{ap}}(x_\star)-DF_\eta^{\mathrm{ex}}(x_\star)}_2
<1.
\]
Hence the approximate frozen map is locally linearly attracting at $x_\star$.
\end{proof}

Theorem~\ref{thm:discrete} is the algorithm-level counterpart of the local flow
results.  It shows that the discrete filtered iteration---with a
state-dependent admissible shift and target index $k$---inherits the correct
prescribed-index local structure provided that the shift is admissible at the
target saddle, the filter preserves sign on the shifted spectrum, and the step
size is chosen below the local spectral threshold \eqref{eq:eta_condition}.

\begin{discussion}[inner--outer coupling and adaptive depth]
The local theory also clarifies how reflector accuracy interacts with the outer
state update.  Proposition~\ref{prop:local_contraction} shows that reflector
error perturbs the local contraction margin of the discrete map, while
Proposition~\ref{prop:direrr} yields
\[
\norm{\widetilde d-d}_2\le \norm{\widetilde R-R}_2\,\norm{\nabla E(x)}_2,
\]
so a relatively coarse inner sign evaluation can still be acceptable when the
outer iterate is far from stationarity and \(\|\nabla E(x)\|_2\) is large.
Near a target saddle, however, the discrete linearization in
Theorem~\ref{thm:discrete} shows that preserving the correct sign pattern is
the minimal requirement for local index selectivity, while accurate sign
approximation becomes increasingly important if one also wants to retain the
contraction factors of the exact reflector iteration.  This inner--outer
coupling explains why shallow filters can be useful in early outer iterations,
while deeper filters or stronger sign engines are needed in the final local
regime.  It also motivates adaptive-depth and warm-start strategies for future
work.
\end{discussion}

\subsection{Computational regime and complexity trade-offs}

The sign formulation is most attractive when explicit Hessians or dense Hessian
blocks are already available.  In that regime, finite-step Newton--Schulz is
dominated by dense matrix--matrix products, which align naturally with BLAS-3
and accelerator kernels \cite{ChenChow}.  By contrast, in matrix-free settings
where only Hessian--vector products are available, tracked-subspace and
Krylov-type eigenspace methods are typically the more natural computational
building blocks.

A basic complexity tension should be stated explicitly.  The ideal reflector
\(R_k=I-2P_k\) is a rank-\(k\) modification of the identity, so when \(k\) is
small there are clear asymptotic incentives to work directly with the target
unstable subspace rather than with a full matrix function.  In dense arithmetic,
one Newton--Schulz step has \(O(d^3)\) cost, whereas tracked \(k\)-frame updates
are dominated by Hessian--frame products of \(O(d^2k)\) type, together with
lower-order orthogonalization costs.  Likewise, block Krylov/Lanczos procedures
targeting \(k\) extremal modes are typically built from \(O(d^2k)\)-type dense
Hessian--frame products per outer iteration, although their total cost also
depends on restart and convergence behavior; representative implementations
include implicitly restarted Arnoldi methods and block preconditioned
conjugate-gradient eigensolvers
\cite{DuerschShaoYangGu2018LOBPCG,Knyazev2001LOBPCG,Lehoucq2001IRAM,
LehoucqSorensenYang1998ARPACK}.  The matrix-sign viewpoint becomes
competitive when dense matrix kernels are highly optimized, when batched or
accelerator execution favors high arithmetic intensity, or when \(k\) is not
very small relative to \(d\).  The discussion therefore involves four
practically distinct reflector realizations: the exact spectral reflector, a
tracked-subspace baseline, the adaptive-index raw sign, and shifted-sign
realizations based on finite-step sign filters.  Their numerical attractiveness
depends on which information is available---full Hessians, Hessian actions, or a
usable target gap---and on which kernels dominate the cost.  The matrix-sign
route is therefore not a universal replacement for low-rank subspace tracking,
but a dense-kernel alternative whose attractiveness increases when explicit
Hessians, nearby repeated Hessians, batched workloads, or non-small target
indices make full-matrix primitives competitive.

\section{Numerical experiments}

The experiments are designed to test the matrix-function conclusions behind the
reflector construction.  We compare four reflector realizations: the exact
spectral reflector \(I-2P_k\), a simple tracked-subspace baseline, the raw sign
\(\sgn(H)\), and shifted finite-step sign filters.  Among shifted filters, the
midpoint rule is the default shift policy.  The supplementary code bundle
contains the scripts and tabulated outputs used to generate all figures.

The tracked baseline evolves a \(k\)-frame \(Q_n\) by an orthogonalized projected
Rayleigh step and uses \(I-2Q_nQ_n^T\) in the state update.  It is included as a
transparent subspace-tracking comparator rather than as an optimized eigensolver
implementation.  The comparison is therefore most informative about regimes:
small-\(k\) or matrix-free problems favor subspace methods, while dense,
batched, or non-small-\(k\) explicit-Hessian problems can make full-matrix sign
kernels competitive.

\subsection{Controlled spectral tests}

We first isolate the scalar matrix-function mechanism.  A synthetic spectrum
with dimension \(d=256\), target index \(k=32\), and prescribed gap
\((\lambda_k,\lambda_{k+1})\) is used; because the matrices are symmetric,
Proposition~\ref{prop:spectralerr} reduces the reflector error to the maximum scalar
sign error on the shifted spectrum.  Figure~\ref{fig:controlled_spectral}
reports three tests.  The left panel moves the shift
\(s=\lambda_k+\theta(\lambda_{k+1}-\lambda_k)\) through the target gap.  The
operator error is smallest near the midpoint, in agreement with
Proposition~\ref{prop:mid}.  The middle panel increases the spectral radius while
keeping the unscaled target gap fixed; conservative scaling then compresses the
scaled margin \(\gamma\), and the finite-step sign error grows as predicted by
Proposition~\ref{prop:NSerr} and Discussion~\ref{disc:spectral_compression}.
The right panel shows the same effect in scalar form: smaller \(\gamma\) requires
larger Newton--Schulz depth before the quadratic regime becomes visible.

\begin{figure}[H]
\centering
\includegraphics[width=0.95\linewidth]{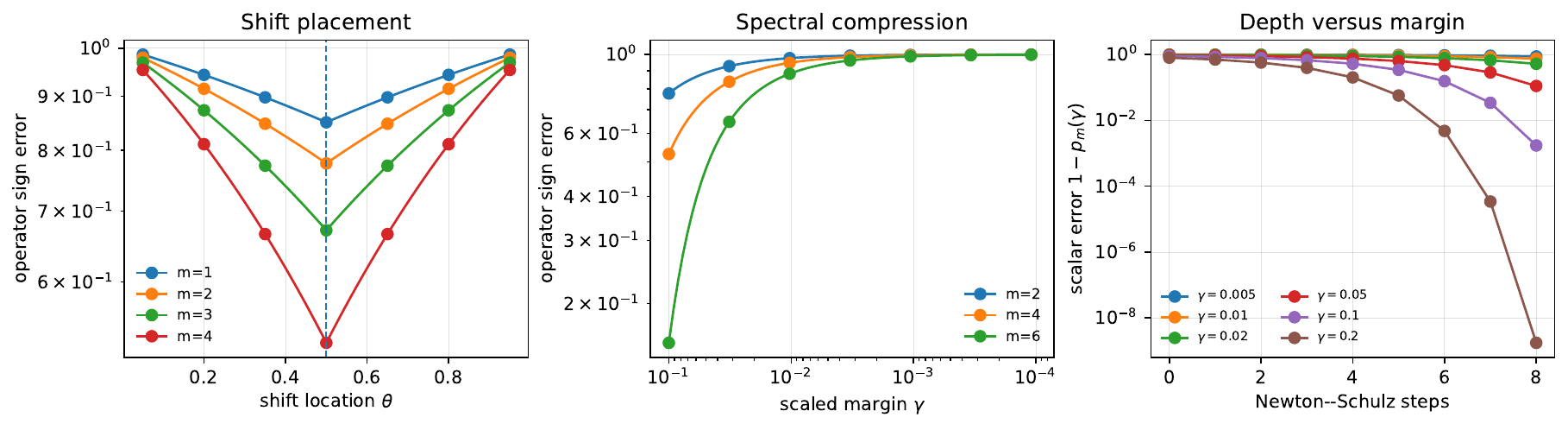}
\caption{Controlled spectral tests for finite-step shifted-sign filters.
Left: moving the shift through the target gap gives the smallest operator sign
error near the midpoint.  Middle: as conservative scaling compresses the scaled
margin \(\gamma\), fixed-depth Newton--Schulz errors increase.  Right: the scalar
error \(1-p_m(\gamma)\) decreases much more slowly when \(\gamma\) is small.}
\label{fig:controlled_spectral}
\end{figure}

\subsection{Raw sign versus prescribed-index reflection}

The M\"uller--Brown surface \cite{MullerBrown} provides a two-dimensional test
in which prescribed-index geometry is visible.  Figure~\ref{fig:muller} compares
trajectories from common starting points.  The exact reflector and the
midpoint-shifted sign realization follow closely aligned index-one reflector
trajectories.  The raw sign follows a different stationary-point-search geometry,
consistent with Proposition~\ref{prop:raw}: it stabilizes nondegenerate
critical points without selecting a prescribed Morse index.

\begin{figure}[t]
\centering
\includegraphics[width=0.95\linewidth]{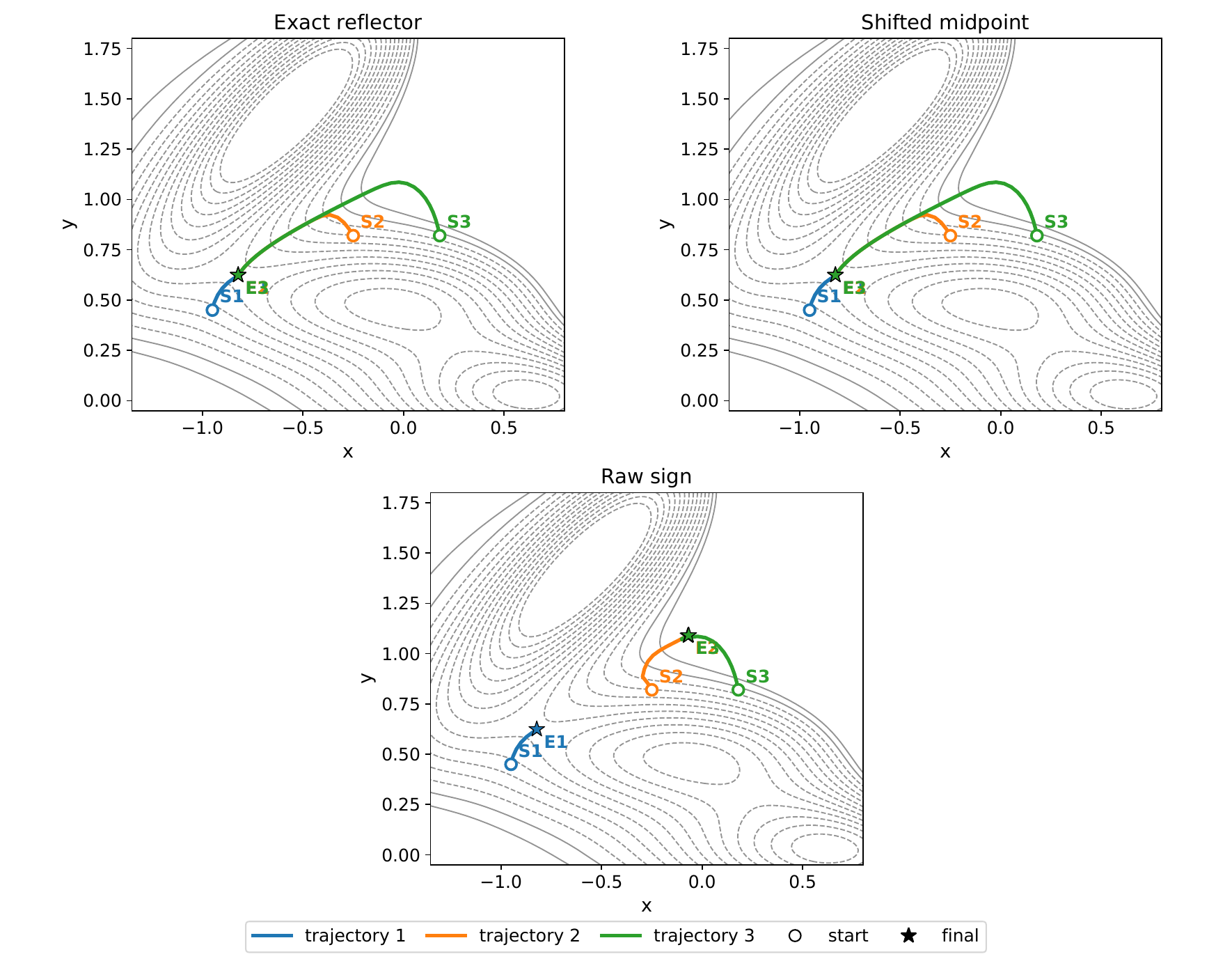}
\caption{Trajectory comparison on the M\"uller--Brown surface.
The exact reflector and midpoint-shifted sign realization follow closely aligned
prescribed-index trajectories, whereas raw \(\sgn(H)\) follows a different
stationary-point-search geometry.}
\label{fig:muller}
\end{figure}

\subsection{Target-index scan}

To test behavior beyond index one, we use a dense rotated quartic family with a
known index-\(k\) saddle at the origin.  Let \(y=Q^Tx\), where \(Q\) is a fixed
random orthogonal matrix, and set
\[
E_{k,d}(x)=\frac14\sum_{i=1}^d y_i^4+\frac12\sum_{i=1}^d c_i y_i^2,
\qquad
c_i<0\ (i\le k),\quad c_i>0\ (i>k).
\]
We use \(d=64\) and scan \(k\in\{1,2,4,8,16\}\) from matched random starts.
Figure~\ref{fig:k_scan} shows that a shallow midpoint filter with \(m=2\) is not
robust across the scan, while \(m=4\) and \(m=6\) recover the exact-reflector
success rate and sharply reduce local direction error.  In this dense
implementation, the tracked baseline becomes more costly as \(k\) grows.  The
experiment supports the regime statement rather than a universal dominance
claim: shifted full-matrix filters become more attractive when the target index
is not very small and the sign engine is sufficiently accurate.

\begin{figure}[t]
\centering
\includegraphics[width=0.95\linewidth]{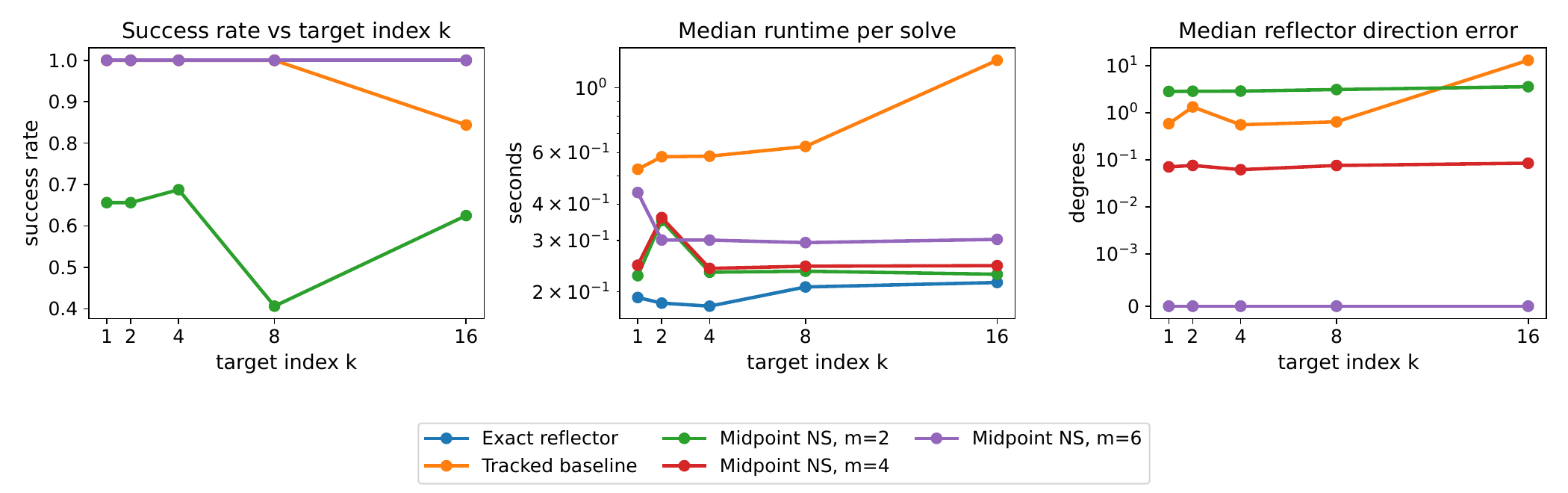}
\caption{Moderate-dimensional target-index scan on a dense rotated quartic
family with \(d=64\) and prescribed index \(k\in\{1,2,4,8,16\}\).
Left: success rate over 32 matched random starts.  Middle: median runtime per
solve.  Right: median one-step reflector direction error over local
perturbations.}
\label{fig:k_scan}
\end{figure}

\subsection{Stiff spectra: Allen--Cahn}

The one-dimensional semidiscrete Allen--Cahn energy with Neumann boundary
conditions is
\[
E_h(u)= h\sum_{i=1}^n \frac{\eps^2}{2}\left(\frac{u_{i+1}-u_i}{h}\right)^2
+ h\sum_{i=1}^n \frac{(u_i^2-1)^2}{4}.
\]
With the standard ghost-point Neumann Laplacian,
\[
\nabla E_h(u)= -\eps^2\Delta_h u + u^{\circ 3}-u,
\qquad
H_h(u)= -\eps^2\Delta_h + \diag(3u_i^2-1).
\]
For \(\eps=0.4\) and \(n=41\), the zero state is an index-one saddle.  The test
therefore probes a stiff but explicit Hessian regime in which the target index
is easy to verify.

Figure~\ref{fig:allen_cahn} confirms the spectral-compression picture.  Exact
reflection and the tracked-subspace baseline remain closest to the discrete
saddle.  Midpoint-shifted filters improve as the Newton--Schulz depth increases,
and their local direction error decreases substantially with depth, but shallow
polynomial filters do not match tracked or exact reflection in this stiff
semidiscrete setting.  This is the practical boundary predicted by
Discussion~\ref{disc:spectral_compression}: the shifted-sign geometry is correct,
but a compressed scaled margin requires deeper or stronger sign engines.

\begin{figure}[t]
\centering
\includegraphics[width=0.95\linewidth]{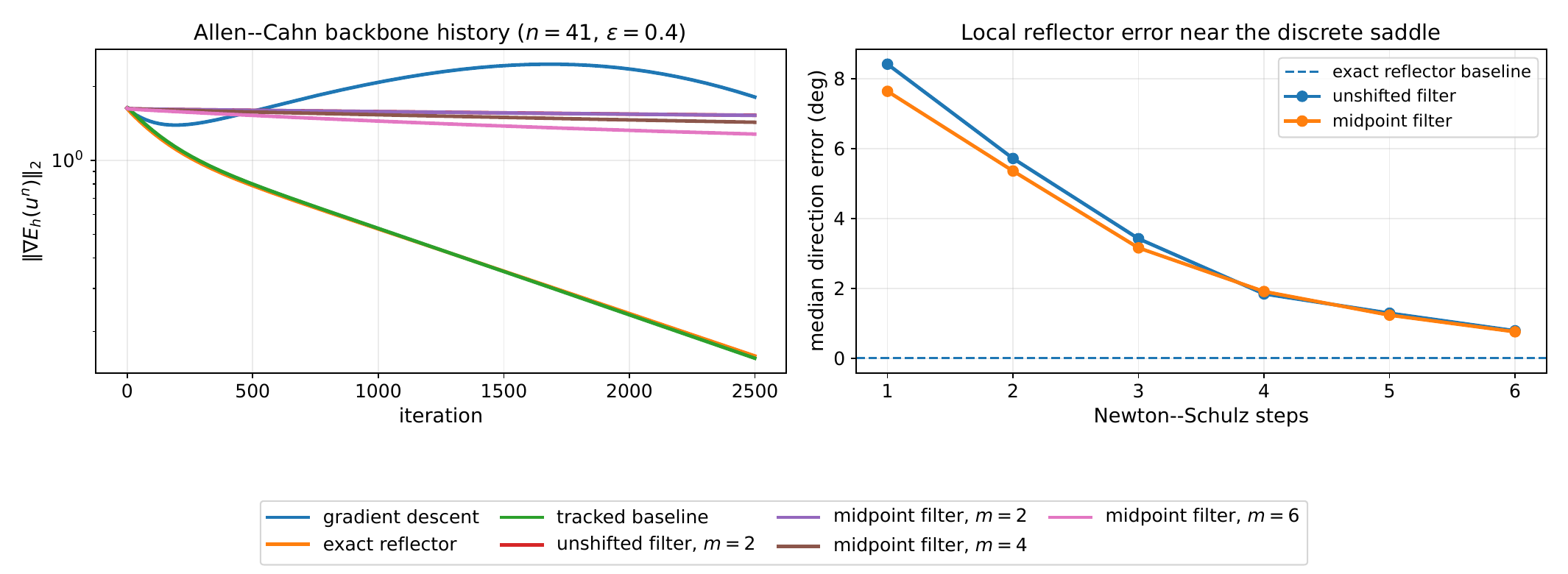}
\caption{Semidiscrete Allen--Cahn test.
Left: gradient-norm histories for gradient descent, the exact reflector, a
tracked-reflector baseline, an unshifted finite-step sign filter, and
midpoint-shifted sign filters with depths \(m=2,4,6\).  Right: median one-step
direction error near the discrete saddle.}
\label{fig:allen_cahn}
\end{figure}

\subsection{Dense explicit-Hessian timings}

The final experiment separates kernel cost from setup cost.  Figure~\ref{fig:cost}
reports dense explicit-Hessian timings at Newton--Schulz depth \(m=2\).  The left
panel charges only the online sign kernels, assuming the shift and scale have
already been supplied by the policy; in this conditional comparison, the
Newton--Schulz cores are much cheaper than a full dense eigensolve at the larger
tested sizes.  The right panel charges the midpoint filter for exact spectral
setup of the shift and scale.  With exact setup included, the total midpoint
cost is comparable to a full eigensolve in this moderate-size test.  Thus the
matrix-sign route is most attractive when shift/scale information is inexpensive,
reused, warm-started, or amortized across related Hessians.

\begin{figure}[t]
\centering
\includegraphics[width=0.95\linewidth]{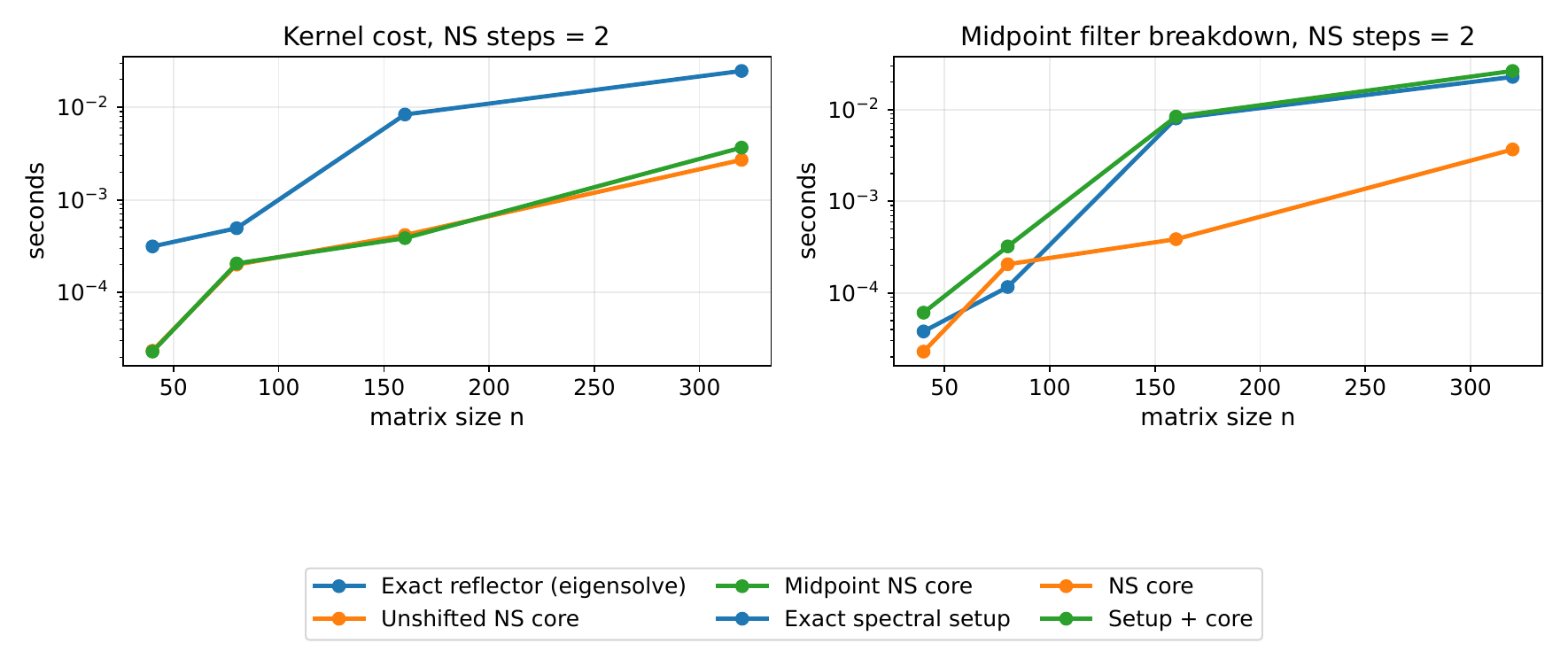}
\caption{Dense explicit-Hessian timing benchmark at Newton--Schulz depth
\(m=2\).  Left: kernel-level comparison between a full-eigensolve exact
reflector and Newton--Schulz sign cores with shift and scale already supplied.
Right: midpoint-filter cost when exact spectral setup is included.}
\label{fig:cost}
\end{figure}

Taken together, the experiments give a regime map.  The shifted sign is the
right matrix-function object for prescribed-index reflection, raw \(\sgn(H)\) is
not index-selective, midpoint shifting is the clean default for finite-step
filters, and Newton--Schulz depth must be chosen in relation to the scaled
spectral margin.  The dense-kernel route is not a replacement for subspace
tracking in all settings; it is a complementary realization for explicit-Hessian
problems where full-matrix kernels, shift reuse, or non-small target indices
make matrix-function primitives attractive.

\section{Conclusion}

This paper studies prescribed-index reflection as a matrix-function problem.
For a symmetric matrix \(H\), the reflector \(I-2P_k\) associated with the first
\(k\) eigenvectors is exactly \(\sgn(H-sI)\) whenever the shift lies in the target
spectral gap.  This identity separates the exact spectral object from the
numerical sign engine used to approximate it.

The main analytical point is that exact sign evaluation is not the minimal local
requirement.  Odd sign-preserving filters inherit the same local
prescribed-index structure, while quantitative accuracy is governed by the
finite-step scalar sign error on the shifted spectrum.  For Newton--Schulz, this
leads to a gap-dependent operator bound and to a simple explanation of spectral
compression under conservative scaling.  The shift analysis shows that the
midpoint maximizes the worst-case separation from the singularity of the sign
function and gives deterministic certificates for inexact and reused shifts.

The experiments support this interpretation.  Controlled spectra verify the
margin predictions, low-dimensional trajectories distinguish shifted signs from
raw signs, target-index scans show how the comparison changes as \(k\) grows,
and Allen--Cahn and timing tests identify the stiff and dense-kernel regimes.
The resulting conclusion is not that full-matrix sign filters universally
replace subspace tracking.  Instead, shifted matrix-sign reflectors provide a
complementary dense explicit-Hessian realization whose effectiveness depends on
the target index, the scaled spectral margin, the desired reflector accuracy, and
the cost of obtaining or reusing shift and scale information.

Future work should develop adaptive-depth and rational sign engines for stiff
spectra, nonasymptotic links between reflector error and outer iteration
complexity, and amortized or warm-started shift policies for slowly varying
Hessian sequences.

\section*{Data and Code Availability}

No external data sets were used in this study.  Code, tabulated outputs, and
figure-generation scripts used for the numerical results are provided in the
supplementary materials associated with this submission.

\bibliographystyle{\siambibliographystyle}
\bibliography{references}

\end{document}